\newtheorem{theorem}{Theorem}[section]
\newtheorem{corollary}[theorem]{Corollary}
\newtheorem{lemma}[theorem]{Lemma}
\newtheorem{proposition}[theorem]{Proposition}
\theoremstyle{remark}
\newtheorem{remark}[theorem]{\sc Remark}
\theoremstyle{remark}
\theoremstyle{definition}
\newtheorem{definition}[theorem]{Definition}
\theoremstyle{remark}
\newtheorem{example}[theorem]{\sc Example}
\theoremstyle{remark}
\theoremstyle{remark}
\numberwithin{equation}{section}  % numbers as (1.2) instead of (1) etc
\renewcommand{\Box}{\square}    %\diamond
\newcommand{\cal}{\mathcal}
\newcommand{\rank}{\mathrm{rank\hspace{2pt}}}
\renewcommand{\int}{{\mathrm{int}}}
\newcommand{\Sing}{{\mathrm{Sing\hspace{2pt}}}}
\newcommand{\id}{{\mathrm{id}}}
\newcommand{\im}{{\mathrm{Im\hspace{1pt}}}}
\renewcommand{\ker}{\mathop{{\mathrm{ker}}}\nolimits}
\newcommand{\coker}{\mathop{{\mathrm{coker}}}\nolimits}
\newcommand{\e}{\varepsilon}
\newcommand{\m}{\setminus}
\newcommand{\fin}{\hspace*{\fill}$\Box$\vspace*{2mm}}
\newcommand{\tF}{F^{\pitchfork}}
\newcommand{\tE}{E^{\pitchfork}}
\newcommand{\tmu}{\mu^{\pitchfork}}
\newcommand{\cA}{{\cal A}}
\newcommand{\cB}{{\cal B}}
\newcommand{\cC}{{\cal C}}
\newcommand{\cO}{{\cal O}}
\newcommand{\cX}{{\cal X}}
\newcommand{\cN}{{\cal N}}
\newcommand{\cY}{{\cal Y}}
\newcommand{\cZ}{{\cal Z}}
\newcommand{\bC}{{\mathbb C}}
\newcommand{\bD}{{\mathbb D}}
\newcommand{\bP}{{\mathbb P}}
\newcommand{\bL}{{\mathbb L}}
\newcommand{\bZ}{{\mathbb Z}}
\newcommand{\bQ}{{\mathbb Q}}
\newcommand{\bV}{{\mathbb V}}
\begin{document}

\title[Vanishing homology of projective hypersurfaces]
 {Vanishing homology of projective hypersurfaces
with 1-dimensional singularities}

\author{\sc Dirk Siersma}  

\address{Institute of Mathematics, Utrecht University, PO
Box 80010, \ 3508 TA Utrecht, The Netherlands.}

\email{D.Siersma@uu.nl}

\author{\sc Mihai Tib\u ar}

\address{Univ. Lille, CNRS, UMR 8524 - Laboratoire Paul Painlev\'e, F-59000 Lille, France.}

\email{tibar@math.univ-lille1.fr}

\thanks{The authors express their gratitude to CIRM in Luminy for supporting this research project through the ``Recherche en Bin\^{o}me'' program. The second named author acknowledges the support of the Labex CEMPI (ANR-11-LABX-0007-01).}

\subjclass[2000]{32S30, 58K60, 55R55, 32S50}

\keywords{singular projective hypersurfaces, vanishing homology, nonisolated singularities, monodromy}

%\date{\mbox{}} %

\dedicatory{}

%\commby{}

%%% ----------------------------------------------------------------------

\begin{abstract}  

We introduce and study the vanishing homology of singular projective hypersurfaces.  We prove its concentration in two levels in case of 1-dimensional singular locus $\Sigma$, and moreover determine the ranks of the nontrivial homology groups. These two groups depend on the monodromy  at special points of   $\Sigma$ and on the effect of the monodromy of the local system over its complement.
\end{abstract}
 
%%% ---------------------------------------------------
\maketitle
%%% ---------------------------------------------------

\setcounter{section}{0}

\section{Introduction and results}\label{s:intro}

 The homology of a projective hypersurface $V\subset \bP^{n+1}$ is known for smooth $V$ whereas only few results are available in the singular setting. The classical Lefschetz Hyperplane Theorem (LHT) yields that the inclusion of spaces induces an isomorphism:
\begin{equation}\label{eq:lht}
 H_k(V, \bZ) \stackrel{\simeq}{\to}  H_k(\bP^{n+1},\bZ)
\end{equation}
 for  $k < n$ and an epimorphism for $k=n$, independently on the singular locus $\Sing V$. 
Since $V$ is a CW-complex of dimension $2n$, the remaining task is to find the homology groups $H_k(V, \bZ)$ for $k\ge n$.

\smallskip
  
In case of a smooth hypersurface $V_{n,d}$ all homology groups appear to be free and by Poincar\'e and Lefschetz duality\footnote{see  \cite{Di0} for details}:
 $H_k(V_{n,d}, \bZ) \cong H_k(\bP^{n},\bZ)$ if $k \ne n$ and the rank of $H_n(V_{n,d},\bZ)$
follows from the Euler characteristic computation
$\chi(V_{n,d})  =  n+2 - \frac{1}{d} [ 1 + (-1)^{n+1} (d-1)^{n+2} ]$. Smooth projective complete intersections have been studied by Libgober and Wood \cite{LW}.

In the 1980's Dimca studied the case of isolated singularities \cite{Di0}, \cite{Di}; we shall discuss his main result \cite[Thm. 4.3]{Di} in \S \ref{s:vh-is}.

\smallskip

Our paper focuses on the first unknown case, $\dim \Sing V= 1$. We approach the singular hypersurface $V$ by 
comparing its integer homology to  that of a smooth hypersurface of the same degree, as an intermediate step towards computing the homology of singular hypersurfaces. A different viewpoint, based on Griffiths cohomological techniques,  has been taken by Hulek and Kloosterman in the  study of elliptic 3-folds \cite{HK}. 

\smallskip 

 We therefore introduce and study the ``vanishing homology" of $V$, as follows.  
\begin{definition}
Let $f=0$ be the defining equation of $V \subset \bP^{n+1}$ as a reduced hypersurface, where $d= \deg f$. Consider the following one-parameter smoothing of degree $d$,  $V_\e := \{ f_\e = f + \e h_d = 0\}$, where $h_d$ denotes a general homogeneous polynomial of degree $d$.   Let:
\[  \bV_\Delta := \{ (x, \e) \in \bP^{n+1} \times \Delta \mid f + \e h_d = 0\} 
\]
denote the total space of the pencil, where $V_0 := V \subset \bP^{n+1} \times \{ 0\}$ and $\Delta$ is a small enough disk centered at $0\in \bC$ such that $V_\e$ is non-singular for all $\e \in \Delta^*$. Let $A := \{ f= h_d = 0\}$ be the axis of the pencil and let
$\pi : \bV_\Delta \to \Delta$ denote the projection. We define:
\[  H^{\curlyvee}_{*}(V) := H_*(\bV_\Delta, V_\e;\bZ)\]
and call it the \emph{vanishing homology of $V$}. 
\end{definition}

The genericity of $h_d$ ensures the existence of small enough disks $\Delta$ as in the above definition, see e.g. \cite[Prop. 2.2]{ST-bettimax}.
Note that $\bV_\Delta$ retracts to $V$, thus the vanishing homology compares $V$ to the smooth hypersurface $V_\e$ of the same degree. 
Since all smooth hypersurfaces of fixed degree are homeomorphic,  the vanishing homology does not depend on the particular smoothing of degree $d$,  it is thus an invariant of $V$.

 %%%%%%%%%%%%%%%%%%%%%%%%%%%%%%%%%%%%%%%%%%%%%%%%%%%%%%%%%%%%%%%%%
With the vanishing homology we recover Dimca's result for isolated singularities \cite[Thm. 4.3]{Di}, see Proposition \ref{p:dimca} and Proposition \ref{p:dimca1}.

\medskip

Our first result Theorem \ref{t:main} is that the vanishing homology $H^{\curlyvee}_{*}(V)$, in case $\dim \Sing V =1$, is concentrated in dimensions $n+1$ and $n+2$ only. 

%Let us point out here that this concentration result cannot be obtained by the vanishing cycles sheaf technique (originally introduced by %Deligne) applied to the holomorphic map $\pi : \bV_\Delta \to \Delta$.
 %It is  already well-known that the local concentration result for the homology of the Milnor fibre of hypersurfaces with line singularity,
 %obtained by Siersma \cite{Si1} and De Jong \cite{dJ} in the 1980's, cannot be proved by sheaf methods. 

By the exact sequence of the pair $(\bV_\Delta, V_\e)$, the concentration of the vanishing homology implies the isomorphisms:

\[H_k(V,\bZ) \simeq  H_k(V_{n,d},\bZ) \simeq  H_k(\bP^n,\bZ) \ \mbox{for} \ k \not= n, n+1,n+2.\] 

In the second part of the paper  we investigate the relations among the remaining homology groups $H_k(V,\bZ)$ (i.e. $k= n, n+1, n+2$)  and the vanishing homology,  and we single out remarkable particular cases.

\smallskip

  Our main results in \S \ref{s:Betti} are
formulas for the (ranks of the) possibly non-trivial groups $H^{\curlyvee}_{n+1}(V)$ and $H^{\curlyvee}_{n+2}(V)$. They depend on the information about local isolated or special non-isolated singularities, the properties of the curve part of $\Sing V$, the transversal singularity types and the monodromies along loops in the transversal local systems. The singular locus $\Sing V$ has a finite set $R$ of isolated points and finitely many curve branches. Each such branch $\Sigma_i$ of $\Sing V$ has a generic transversal type (of transversal Milnor fibre $\tF_i$ and Milnor number denoted by $\tmu_i$) and the axis $A$ cuts it at a finite set of general points $P_i$. It also contains a finite set $Q_i$ of points with non-generic transversal type, which we call \emph{special points}, and we denote by $\cA_q$ the local Milnor fibre at $q\in Q$. At each point $q\in Q_i$ there are finitely many locally irreducible branches of the germ $(\Sigma_i, q)$,  we denote by $\gamma_{i,q}$ their number and let $\gamma_{i} := \sum_{i\in Q_i}\gamma_{i,q}$ (see \S \ref{ss:notations} for the notations). 

\smallskip

Our Theorem \ref{t:local_fibres}
determines $H^{\curlyvee}_{n+2}(V)$ as an intersection of local and global contributions in a reference space consisting of the direct sum of the homology of the transversal fibres. As a consequence it %%%%%
 tells that the $(n+2)$th vanishing Betti number is bounded by the sum of all Milnor numbers of transversal singularities, taken over all irreducible 1-dimensional components of $\Sing V$, and each special singular point on $\Sing V$ with non-trivial transversal monodromy decreases this Betti-number.

\smallskip

Corollary \ref{c:annul} (see also Example \ref{ex:red}) tells that
if for each irreducible 1-dimensional component $\Sigma_i$ of $\Sing V$ we have at least one local special singularity with rank zero $(n-1)$th homology group, then the vanishing homology of $V$  is free, concentrated in dimension $n+1$ only, and the corresponding Betti number is given by the formula:
\[  b_{n+1}(\bV_\Delta, V_\e) =\sum_i (\nu_i+ \gamma_i + 2g_i -2)\tmu_i +
(-1)^n \sum_{q \in Q} (\chi(\cA_q)-1)
+ \sum_{r \in R} \mu_r , 
\]
where $Q := \cup_i Q_i$, $\nu_i := \# P_i$, $\mu_r$ is the Milnor number of the isolated singularity germ $(V, r)$, and $g_i$ is the genus of $\Sigma_i$ (see \S \ref{ss:cw2} for the meaning of the genus in case of singular $\Sigma_i$).

\smallskip

 In our proofs we use in particular the detailed construction of a CW-complex model of the pair $(\bV_\Delta, V_\e)$ which is done in \S \ref{ss:cw}  and \S \ref{ss:cw2}. We also use the full strength of the results on local 1-dimensional singularities found by Siersma \cite{Si1}, \cite{Si2}, \cite{Si3}, \cite{Si-Cam}, cf also \cite{Yo}, \cite{Ti-nonisol}, which involve the study of the local system of transversal Milnor fibres. 

\smallskip

We provide several examples in \S \ref{s:remarks}. In certain cases we  can prove  the freeness of the $(n+1)$th vanishing homology group. We also show an example where the homology of $V$ over $\bQ$ may be computed via our formulas for the vanishing homology.

\smallskip
%%%%%%NEW 6 Nov 2016:
Let us finally mention a couple of recent applications. %The methods of this paper can be used in other 1-dimensional locus situations. 
 The 1-dimensional locus case appeared recently in work of  Fr\"uhbis-Kr\" uger and Zach \cite{FZ}, \cite{Za}.
 They have studied, following work by Damon and Pike \cite{DP}, the  vanishing cycles of a certain class of smoothable isolated
Cohen-Macaulay codimension 2 singularities. As Tjurina transforms yield non-isolated singularities which can be studied with the methods of our paper, they could obtain in this way more detailed insight over the vanishing topology of a certain class of isolated determinantal singularities. 
Also recently we have computed  the homology of a local Milnor fibre  via admissible deformations  \cite{ST-MiFiH} by using the approach of this paper.

%%%%%%%%%%%%%%%%%%%%%%%%%%%%%Section 2 %%%%%%%Vanishing homology for isolated singularities%%%%%%%%%
\section{Vanishing homology in case of isolated singularities} \label{s:vh-is}

Throughout this paper we use homology over $\bZ$ unless otherwise stated.
Let $V := \{ f= 0 \} \subset \bP^{n+1}$ be a hypersurface of degree $d$ with singular locus consisting of a finite set  of points $R$.
 Since $V$ has only isolated singularities, the genericity of the axis $A = \{ f=h_d=0\}$  of the pencil $\pi : \bV_\Delta \to \Delta$ just means that $A$ avoids $R$. It turns out (see also \cite[\S 5]{ST-bettimax}) that $\bV_\Delta$ is non-singular and that the projection $\pi$ has isolated singularities 
precisely at the points of $R$.  Given some ball $B\subset \bP^{n+1} \times \Delta$, we shall denote
the intersection $B\cap \bV_\Delta$ simply by $B$, for the sake of simplicity. 

 For small enough balls $B_r$, at each point $r\in R$, the 
 homotopy retraction within the fibration $\pi$ yields the isomorphism:
\begin{equation}\label{eq:latice}
  H_*(\bV_\Delta, V_\e) \simeq \bigoplus_{r \in R} H_*(B_r, B_r \cap V_\e)
\end{equation}
where 
 $B_r \cap V_\e$ is the Milnor fibre of the isolated hypersurface singularity germ $(V, r)$. The relative homology $H_*(B_r, B_r \cap V_\e)$
is concentrated in dimension $n+1$ and $H_{n+1}(B_r, B_r \cap V_\e)$ is isomorphic to the Milnor lattice $\bL_r$ of the hypersurface germ $(V,r)$, thus  isomorphic to $\bZ^{\mu_r}$, where $\mu_r$ is the Milnor number of $(V, r)$. We get the following conclusion:
\begin{lemma} \label{p:concIsol}
If $\dim \Sing V \le 0$ then:
\[ %\begin{array}{c}
H^{\curlyvee}_{k}(V) = 0 \; \mbox{\rm if} \; k \ne n+1, \]
 \[ H^{\curlyvee}_{n+1}(V) = \bigoplus_{r\in R} \bL_r .
%\end{array}
 \]\fin
\end{lemma}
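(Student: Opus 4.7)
The plan is to assemble the lemma from the geometric setup already sketched in the preceding paragraphs. First I would verify the two facts about the pencil $\pi:\bV_\Delta\to\Delta$ that are asserted just before the statement: $\bV_\Delta$ is smooth and the only critical points of $\pi$ are the isolated points of $R$. By the genericity of $h_d$, Bertini guarantees that the base locus $A=\{f=h_d=0\}$ is smooth and disjoint from $R$; away from $A$, a critical point of $\pi$ is a critical point of the containing fibre, so for $\Delta$ small enough such a point can only lie over $0$, which forces it to be a singularity of $V$ itself, hence a point of $R$.

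Next I would establish the localization isomorphism
\[ H_*(\bV_\Delta, V_\e) \;\simeq\; \bigoplus_{r\in R} H_*(B_r, B_r\cap V_\e). \]
For this, I pick pairwise disjoint closed Milnor balls $B_r$ around each $r\in R$ whose boundary spheres are transverse to every nearby fibre of $\pi$. The restriction of $\pi$ to $\bV_\Delta\setminus\bigcup_r B_r$ is then a proper submersion of a manifold with corners onto $\Delta$, and hence a locally trivial fibration by Ehresmann's theorem; a trivialization furnishes a strong deformation retract of this complement onto its fibre over $\e$ that preserves each $\partial B_r\cap\bV_\Delta$. Gluing with the identity on $\bigcup_r B_r$ and invoking excision yields the displayed splitting.

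Finally I would identify each summand. By construction the pair $(B_r, B_r\cap V_\e)$ is a Milnor representative of the isolated hypersurface germ $(V,r)$: $B_r$ is a small ball centered at $r$ and $V_\e\cap B_r$ is the zero set of a generic smoothing of a local defining function of $V$ at $r$. Milnor's classical theorem then asserts that this local Milnor fibre has the homotopy type of a bouquet of $\mu_r$ copies of $S^n$. Since $B_r$ is contractible, the long exact sequence of the pair gives $H_k(B_r, B_r\cap V_\e)\cong\widetilde H_{k-1}(B_r\cap V_\e)$, which vanishes except for $k=n+1$, where it equals the Milnor lattice $\bL_r\cong\bZ^{\mu_r}$. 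Summing over $r\in R$ yields both assertions of the lemma.

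The only mildly technical obstacle is the justification of the localization step, namely checking that the Milnor balls can be chosen small enough that their boundaries are transverse to all nearby fibres of $\pi$, so that Ehresmann's theorem with corners applies and produces a genuine retraction of pairs. This is a standard transversality argument but worth stating explicitly, since it is what allows the relative homology to be computed purely locally at each singular point.
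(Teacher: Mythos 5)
Your proof follows essentially the same route as the paper's: the paper asserts the smoothness of $\bV_\Delta$ and the concentration of critical points of $\pi$ at $R$ by citing \cite[\S 5]{ST-bettimax}, then invokes a ``homotopy retraction within the fibration $\pi$'' to obtain the localization \eqref{eq:latice}, and identifies each summand with the Milnor data of the germ $(V,r)$. You simply fill in the elided steps (Bertini for the axis, Ehresmann for the retraction, Milnor's bouquet theorem for the concentration), so the argument is correct and matches the paper's in substance.
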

From the long 
exact sequence of the pair $(\bV_\Delta, V_\e)$ we also obtain the 5-terms exact sequence:
$$ 0 \rightarrow  H_{n+1}(V_\e) \rightarrow H_{n+1}(V) \rightarrow \bigoplus_{r\in R} \bL_r  \xrightarrow{\Phi_{n}}  \bL \rightarrow H_{n}(V) \rightarrow 0 $$
where $\bL :=  H_n(V_\e)$ is the intersection lattice of the middle homology of the smooth hypersurface of degree $d$ and the map $\Phi_n$ is identified to the boundary map $H_{n+1}(\bV_\Delta, V_\e) \to H_{n}(V_\e)$.   
 We get the integer homology of $V$ as follows:
\begin{proposition}\label{p:dimca}
\begin{enumerate}
 \item $H_k(V) \simeq H_{k}(\bP^n) $  for $k\not= n, n+1$,
\item $H_{n+1}(V)  \simeq H_{n+1}(\bP^n) \bigoplus\ker \Phi_{n}$, 
\item $H_{n}(V)  \simeq  \coker \Phi_{n}$.
\fin
\end{enumerate}
\end{proposition}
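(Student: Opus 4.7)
The plan is to run the long exact sequence of the pair $(\bV_\Delta, V_\e)$ in integer homology, plug in the concentration result of Lemma \ref{p:concIsol}, and use the fact that $\bV_\Delta$ deformation retracts to $V$ (so $H_*(\bV_\Delta) \simeq H_*(V)$) together with the known homology of a smooth degree-$d$ hypersurface $V_\e$, which by Lefschetz/Poincar\'e duality satisfies $H_k(V_\e) \simeq H_k(\bP^n)$ for $k \ne n$ (the footnoted fact the authors invoke from \cite{Di0}).

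First, to obtain (a), I would observe that for $k \notin \{n, n+1\}$ the long exact sequence provides a fragment
\[
H^{\curlyvee}_{k+1}(V) \to H_k(V_\e) \to H_k(V) \to H^{\curlyvee}_k(V)
\]
whose outer terms vanish by Lemma \ref{p:concIsol}, yielding $H_k(V) \simeq H_k(V_\e) \simeq H_k(\bP^n)$. For (c), the relevant five-term fragment is precisely the one displayed in the excerpt; since $H^{\curlyvee}_n(V) = 0$, the sequence ends with $H_n(V_\e) \to H_n(V) \to 0$, and the kernel of this surjection is the image of $\Phi_n$, so $H_n(V) \simeq \coker \Phi_n$.

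For (b), the same exact sequence gives the short exact sequence
\[
0 \to H_{n+1}(V_\e) \to H_{n+1}(V) \to \ker \Phi_n \to 0,
\]
using that $H^{\curlyvee}_{n+2}(V) = 0$. Here $H_{n+1}(V_\e) \simeq H_{n+1}(\bP^n)$ by Lefschetz/Poincar\'e duality on the smooth hypersurface $V_\e$ (noting $n+1 \ne n$). The only subtle point is that one needs this sequence to split in order to conclude the direct sum decomposition; but $\ker \Phi_n$ is a subgroup of the free abelian group $\bigoplus_{r\in R} \bL_r$, hence itself free, so the short exact sequence of abelian groups splits automatically.

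The main (minor) obstacle is precisely this splitting argument for (b): one must be careful to work integrally and note the freeness of $\ker \Phi_n$ coming from the fact that the Milnor lattices $\bL_r$ are free of rank $\mu_r$. Everything else is a mechanical unwinding of the long exact sequence using the concentration provided by Lemma \ref{p:concIsol} and the retraction $\bV_\Delta \searrow V$.
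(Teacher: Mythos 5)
Your proof is correct and follows essentially the same route the paper implicitly takes: deduce the 5-term exact sequence from the long exact sequence of the pair $(\bV_\Delta, V_\e)$ using the concentration Lemma \ref{p:concIsol} and the retraction $\bV_\Delta \searrow V$, then read off the three isomorphisms. You spell out the one detail the paper leaves unsaid, namely that the short exact sequence $0 \to H_{n+1}(V_\e) \to H_{n+1}(V) \to \ker\Phi_n \to 0$ splits because $\ker\Phi_n$ is a subgroup of the free abelian group $\bigoplus_r \bL_r$ and hence itself free — a correct and necessary observation for the integral statement in (b).
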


This is striking similar to Dimca's  result  \cite[Theorem 2.1]{Di0}, \cite[Theorem 5.4.3]{Di},  although formulated and proved in different terms.
As Dimca observed in \cite{Di0}, we also point out here that the relation between vanishing homology and absolute homology is encoded by the morphism $\Phi_{n}$, which is difficult to identify from the equation of $V$. We send the reader to Proposition \ref{p:dimca1} for our extension of this result in case $\dim \Sing V =1$.

%%%%%%%%%%%%%%%%%%%%%%%%%%%%%%%%%END ISOLATED$
%%%%%%%%%%%%%%%%%%%%%%%%%%%%%%%%%%%%%%%%%%%%%%%%

\section{Local theory of 1-dimensional singular locus} \label{ss:localtheory}

We shall need several facts from the local theory of singularities with a 1-dimensional singular set. We recall them here, following \cite{Si3}, see also the survey \cite{Si-Cam}.
% More recently Dimca and Saito have proved  results on non-isolated singularities in terms of Mixed Hodge Modules \cite{DS1}, \cite{DS2}..................

 We consider a holomorphic function germ $f : (\bC^{n+1},0) \to (\bC,0)$ with singular locus $\Sigma$ of dimension 1. Let  $\Sigma = \Sigma_1 \cup \ldots \cup \Sigma_r $ be the decomposition into irreducible curve components. Let $F$ be the local Milnor fibre of $f$. 
 The homology $\tilde{H}_*(F)$ is
concentrated in dimensions $n-1$ and $n$, namely $H_n (F) = \bZ^{\mu_n}$, 
which is free,  and
$H_{n-1} (F)$ which can have torsion. 

There is a well-defined local system on $\Sigma_i \m \{ 0\}$ having as fibre the homology of the transversal Milnor fibre $\tilde{H}_{n-1} (\tF_i)$, i.e., $\tF_i$ is the Milnor fibre of the restriction of $f$ to the transversal hyperplane section at some $x \in \Sigma_i \m \{ 0\}$, which is an isolated singularity
whose equisingularity class is independent of the point $x$.  
Thus  $\tilde{H}_{*} (\tF_i)$
is concentrated in dimension $n-1$. On this group there acts the \emph{local system monodromy} (also called \emph{vertical monodromy}):

\[ A_i: \tilde{H}_{n-1} (\tF_i) \rightarrow \tilde{H}_{n-1} (\tF_i). \]

As explained in \cite{Si3}, one considers a tubular neighborhood $\cN := \bigsqcup_{i=}^r \cN_i$ of the link $\Sigma\cap S_{\e}^{2n+1}$ of $\Sigma$ in 
$S_{\e}^{2n+1}$ and decomposes the boundary $\partial F$ of the Milnor fibre as
$\partial F = \partial_1 F\cup \partial_2 F$, such that $\partial_2 F := \partial F \cap \cN$ and that  $\partial_1 F\cap \partial_2 F$ retracts to the boundary $\partial\cN$. Then $\partial_2 F = \displaystyle{\mathop{\sqcup}^{r}_{i=1}} \partial_2 F_i$, where $\partial_2 F_i := \partial F\cap \cN_i$.

The homology groups of $\partial_2 F$  are related to the local system monodromies $A_i$ in the following way.   Each boundary component $\partial_2 F_i$ is fibered over the link of $\Sigma_i$ with fibre $\tF_i$.
The Wang sequence of this fibration yields the following non-trivial part, for $n\ge 3$:
\begin{equation}\label{eq:wang}
  0 \rightarrow H_n (\partial_2 F_i) \rightarrow H_{n-1} (\tF_i)
\stackrel{A_i -I}{\rightarrow} H_{n-1} (\tF_i) \rightarrow H_{n-1}
(\partial_2 F_i) \rightarrow 0
\end{equation}

\noindent
In this sequence the following two homology groups play a crucial role:
$ H_n (\partial_2 F) =  \displaystyle{\mathop{\bigoplus}^{r}_{i=1}}
{\rm Ker} (A_i - I)$ and $H_{n-1} (\partial_2 F)  \cong 
\displaystyle{\mathop{\bigoplus}^{r}_{i=1}} \; {\rm Coker} \; (A_i
- I)$.
The first group is free, the second one can have torsion, and they are isomorphic up to torsion. For $n=2$ there is an adapted interpretation of this sequence, cf \cite[Section 6]{Si3}.

What we  will actually need in the following is a relative version of this Wang sequence. Let $\tE_i$ be the transversal Milnor neighborhood containing the transversal fibre $\tF_i$; it is homeomorphic to a $2n$-ball and hence contractible. Let $\partial_2 E_i$  denote the union of such transversal Milnor neighbourhoods along the link $\Sigma_i \cap S_{\e}^{2n+1}$; this may be identified with the tubular neighborhood $\cN_i$, which retracts to the link of $\Sigma_i$. We then have:
%%%%%%%%%%%%%%%%%%%%%%%%%%
\begin{lemma} \label{l:localrelative}
For  $n \geq 2$ 
\[ 0 \rightarrow H_{n+1} (\partial_2 E_i,\partial_2 F_i) \rightarrow H_{n} (\tE_i,\tF_i)
\stackrel{A_i -I}{\rightarrow} H_{n} (\tE_i,\tF_i) \rightarrow H_{n}
(\partial_2 E_i, \partial_2 F_i) \rightarrow 0 \]
is an exact sequence,
and 
\begin{eqnarray*}
 H_{n+1} (\partial_2 E, \partial_2 F) & =  & \displaystyle{\mathop{\bigoplus}^{r}_{i=1}}
{\rm Ker} (A_i - I) \\
 H_{n}( \partial_2 E, \partial_2 F) & \cong &
\displaystyle{\mathop{\bigoplus}^{r}_{i=1}} \; {\rm Coker} \; (A_i- I) 
\end{eqnarray*}
\end{lemma}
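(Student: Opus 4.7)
The plan is to derive the four-term sequence as the \emph{Wang exact sequence of a locally trivial fibration of pairs} over the link $S^1$ of $\Sigma_i$. First I would recall the local geometric setup: along the smooth stratum $\Sigma_i \setminus \{0\}$, Thom's isotopy gives a product structure to the tubular neighborhood $\cN_i$ above $\Sigma_i \setminus \{0\}$, and consequently $(\partial_2 E_i, \partial_2 F_i)$ is a locally trivial bundle of pairs with typical fiber $(\tE_i, \tF_i)$ over the link of $\Sigma_i$, a circle. The geometric monodromy along a small loop around $\Sigma_i$ is precisely the vertical monodromy whose action on $\tilde H_{n-1}(\tF_i)$ is $A_i$.

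Next I would write the Wang long exact sequence associated to this fibration of pairs. One may obtain it from the mapping torus description $(\partial_2 E_i,\partial_2 F_i) \simeq ((\tE_i,\tF_i) \times [0,1])/\sim$ with pair self-homeomorphism $h_i$ inducing $A_i$, together with the Mayer--Vietoris sequence for the cover of this mapping torus by two overlapping arc-neighborhoods (or, equivalently, from the Leray spectral sequence over $S^1$ applied to the relative constructible sheaf). The resulting sequence is
\[
\cdots \to H_k(\tE_i,\tF_i) \xrightarrow{A_i-I} H_k(\tE_i,\tF_i) \to H_k(\partial_2 E_i,\partial_2 F_i) \to H_{k-1}(\tE_i,\tF_i) \to \cdots .
\]

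Then I would identify $H_*(\tE_i,\tF_i)$. Since $\tE_i$ is contractible, the long exact sequence of the pair gives $H_k(\tE_i,\tF_i) \cong \tilde H_{k-1}(\tF_i)$. Because $\tF_i$ is the Milnor fibre of an \emph{isolated} hypersurface singularity germ in $\bC^n$, it has the homotopy type of a wedge of $(n-1)$-spheres; in particular its reduced homology is concentrated in degree $n-1$ and is free of rank $\tmu_i$. Therefore $H_k(\tE_i, \tF_i) = 0$ unless $k = n$, where it equals $\bZ^{\tmu_i}$. The long exact sequence thus truncates to the claimed four-term exact sequence, its outer terms being $\ker(A_i - I)$ and $\mathrm{coker}(A_i - I)$ by exactness. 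The stated identifications for $\partial_2 E$ and $\partial_2 F$ follow at once from the disjoint-union decompositions $\partial_2 E = \sqcup_i \partial_2 E_i$ and $\partial_2 F = \sqcup_i \partial_2 F_i$.

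The main point requiring care is showing that the monodromy of the fibration of pairs induces on $H_n(\tE_i,\tF_i) \cong \tilde H_{n-1}(\tF_i)$ the same operator $A_i$ used in the absolute Wang sequence \eqref{eq:wang}, together with fixing a sign convention so that the middle map is $A_i - I$ rather than $I - A_i$. This is standard but should be checked. Note that the hypothesis $n \geq 2$ suffices here, whereas the absolute sequence \eqref{eq:wang} requires $n \geq 3$: for $n \geq 2$ the fibre $\tF_i$ is connected, so $H_1(\tE_i,\tF_i) \cong \tilde H_0(\tF_i) = 0$ and no low-dimensional anomaly appears in the relative Wang sequence.
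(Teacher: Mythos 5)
Your proof is correct and takes essentially the same route as the paper: both rest on the Wang exact sequence of the circle fibration associated to the link of $\Sigma_i$, together with the identification $H_k(\tE_i,\tF_i)\cong\tilde H_{k-1}(\tF_i)$ coming from the contractibility of $\tE_i$ and the concentration of $\tilde H_*(\tF_i)$ in degree $n-1$. The paper phrases this as an immediate consequence of the absolute Wang sequence \eqref{eq:wang} plus the definitions of $\tE_i$ and $\partial_2 E_i$, while you derive the relative Wang sequence directly from the mapping-torus/Mayer--Vietoris description; the latter is marginally more self-contained and makes transparent your (correct) observation about why $n=2$ presents no anomaly in the relative setting, namely that $\tilde H_0(\tF_i)=0$ by connectedness so the low-degree terms of the relative sequence vanish, exactly the point the paper flags in its remark.
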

\begin{proof}
For $n > 2$ the statement follows immediately from the above Wang sequence \eqref{eq:wang} and the definitions of $\tE_i$ and $\partial_2 E_i$. One observes that $n=2$ is no longer a special case like it was in the absolute setting (see the remark after \eqref{eq:wang}).
\end{proof} 

 The non-trivial part of the  long exact sequence of the pair $(F,\partial_2 F)$ is the following 
6-terms piece. More precisely, we need the following result:

\begin{proposition}\cite{Si3} \label{t:varladder}
The sequence
\[ 0 \rightarrow H_{n+1}(F,\partial_2 F) \rightarrow H_{n}(\partial_2 F)
\rightarrow H_n (F) 
 \rightarrow H_{n}(F,\partial_2 F) \rightarrow H_{n-1}(\partial_2 F)
\rightarrow H_{n-1} (F)
\rightarrow 0 \]
is exact. Moreover \[ H_{n+1}(F,\partial_2 F) \cong H_{n-1} (F)^{\mbox{\tiny free}} \; \mbox{and} \;
H_{n}(F,\partial_2 F) \cong H_{n} (F)   \oplus  H_{n-1} (F)^{\mbox{\tiny torsion}}.\]
\fin
\end{proposition}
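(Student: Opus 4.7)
The plan is to extract the six-term sequence from the long exact sequence of the pair $(F,\partial_2 F)$, and then to identify the two top relative groups via Poincar\'e--Lefschetz duality combined with the universal coefficient theorem. Two concentration inputs drive the reduction. First, $\tilde H_*(F)$ is supported in degrees $n-1$ and $n$, $H_n(F)$ is torsion-free, and $H_k(F)=0$ for $k\ge n+1$, since the local Milnor fibre has the homotopy type of an $n$-dimensional CW complex; this is the general fact for 1-dimensional singular loci recorded at the start of \S \ref{ss:localtheory}. Second, the Wang sequence \eqref{eq:wang} applied to each $\partial_2 F_i\to\mathrm{link}(\Sigma_i,0)$ places $H_*(\partial_2 F)$ in the same two degrees.

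With these vanishings inserted into the long exact sequence of $(F,\partial_2 F)$, everything outside degrees $n-1,n,n+1$ collapses. The leftmost $0$ is immediate from $H_{n+1}(F)=0$. The rightmost $0$ is the non-formal point: it is the extra claim $H_{n-1}(F,\partial_2 F)=0$, which I would deduce from Lefschetz duality
\[
H_{n-1}(F,\partial_2 F)\;\cong\;H^{n+1}(F,\partial_1 F)
\]
together with a vanishing for the right-hand side. The latter rests on Siersma's structural description of $F$ in \cite{Si3}: away from a tubular neighbourhood of $\Sigma$ the restriction of $f$ behaves like a deformation of isolated singularities, so $F$ is obtained from a collar of $\partial_1 F$ by attaching only cells of dimension $\ge n$, coming from the transversal Milnor data $\tE_i$ and from the finitely many special points on $\Sigma$.

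Applying the same Lefschetz duality to the two remaining relative groups yields
\[
H_{n+1}(F,\partial_2 F)\;\cong\;H^{n-1}(F,\partial_1 F),\qquad H_n(F,\partial_2 F)\;\cong\;H^n(F,\partial_1 F),
\]
and the same structural input identifies these with $H^{n-1}(F)$ and $H^n(F)$, respectively. A direct universal coefficient computation, using that $H_n(F)$ is free and $H_{n+1}(F)=0$, then gives $H^{n-1}(F)\cong H_{n-1}(F)^{\rm free}$ and $H^n(F)\cong H_n(F)\oplus H_{n-1}(F)^{\rm torsion}$, as claimed. The main obstacle is the connectivity/vanishing step for $(F,\partial_1 F)$, where the local geometry of 1-dimensional singularities from \cite{Si3} carries the argument rather than pure algebra; the low-dimensional case $n=2$ further requires the adapted forms of the Wang sequence and duality already flagged after \eqref{eq:wang} and in Lemma \ref{l:localrelative}.
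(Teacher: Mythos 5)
The paper does not actually prove Proposition~\ref{t:varladder}: it is cited verbatim from Siersma~\cite{Si3}, so there is no in-paper proof to compare with. Your general framework --- the long exact sequence of $(F,\partial_2 F)$ together with Lefschetz duality for the manifold-with-corners $F$ and the universal coefficient theorem --- is the right kind of machinery, and the UCT computation of $H^{n-1}(F)$ and $H^n(F)$ at the end is correct given the concentration of $\tilde H_*(F)$ in degrees $n-1,n$ with $H_n(F)$ free. The problem is the geometric input you feed into it.

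The structural claim that \emph{$F$ is obtained from a collar of $\partial_1 F$ by attaching only cells of dimension $\ge n$} is both insufficient for the step where you use it and inconsistent with the conclusion. Insufficient: if $(F,\partial_1 F)$ has no relative cells below dimension $n$, you only get $H_k(F,\partial_1 F)=0$ for $k<n$; the vanishing of $H^{n+1}(F,\partial_1 F)=\mathrm{Hom}(H_{n+1}(F,\partial_1 F),\bZ)\oplus\mathrm{Ext}(H_n(F,\partial_1 F),\bZ)$ would additionally require $H_{n+1}(F,\partial_1 F)$ to be torsion and $H_n(F,\partial_1 F)$ to be free, neither of which follows. Inconsistent: the same $(n-1)$-connectivity of $(F,\partial_1 F)$ forces $H^{n-1}(F,\partial_1 F)=0$ by UCT, and hence $H_{n+1}(F,\partial_2 F)\cong H^{n-1}(F,\partial_1 F)=0$ by Lefschetz duality; but the proposition asserts $H_{n+1}(F,\partial_2 F)\cong H_{n-1}(F)^{\mathrm{free}}$, which is nonzero for many $1$-dimensional singularities (e.g.\ $D_\infty$-type bundles with nontrivial $H_{n-1}(F)$). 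In other words the cells of dimension $n-1$ relative to $\partial_1 F$ are precisely what carries the $H_{n-1}(F)^{\mathrm{free}}$ term, so they cannot be absent. The correct geometric fact to invoke (and what Siersma actually establishes in \cite{Si3}) is on the other side: $(F,\partial_2 F)$ is $(n-1)$-connected --- equivalently $H_{n-1}(\partial_2 F)\to H_{n-1}(F)$ is surjective --- which gives the rightmost zero directly, while the identifications of $H_{n+1}(F,\partial_2 F)$ and $H_n(F,\partial_2 F)$ then come from Lefschetz duality combined with the connectivity of $\partial_1 F$, not from an alleged isomorphism $H^k(F,\partial_1 F)\cong H^k(F)$ that you assert without extracting it from the long exact sequence of $(F,\partial_1 F)$.
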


%Note that
%$ H_n (\partial_2 F) =  \displaystyle{\mathop{\oplus}^{r}_{i=1}}
%{\rm Ker} (A_i - I)$ and $H_{n-1} (\partial_2 F)  \cong 
%\displaystyle{\mathop{\oplus}^{r}_{i=1}} \; {\rm Coker} \; (A_i
%- I)$ play again a crucial role.

%%%%%%%%%%%%%%%%%%%%%%%%%%%%%%%
%%%%%%%%%%%%%%%%%%%%%%%%%%%%%
\section{The vanishing neighbourhood of the projective hypersurface}\label{ss:1-dim}

We give here the necessary constructions and lemmas that we shall use in the proof of the announced vanishing theorem:

\begin{theorem}\label{t:main}  
If $\dim \Sing V \le 1$ then $H^{\curlyvee}_{j}(V) = 0$ for all $j\not= n+1, n+2$.
\end{theorem}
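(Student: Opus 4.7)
The approach I would take is to reduce $H^{\curlyvee}_{*}(V) = H_*(\bV_\Delta, V_\e)$ to local contributions near the singular locus via a fibration-retraction argument on the complement, then control each local piece using the local theory of Section~\ref{ss:localtheory}. First, choose a closed vanishing neighbourhood $\cN \subset \bV_\Delta$ of the critical locus of $\pi$, decomposed as $\cN = \cN_R \sqcup \cN_\Sigma$ where $\cN_R = \bigsqcup_{r\in R} B_r$ is a union of small Milnor balls at the isolated singularities and $\cN_\Sigma = \bigsqcup_i T_i$ consists of tubular neighbourhoods of the $1$-dimensional components $\Sigma_i$. Since $\pi\colon \bV_\Delta \setminus \cN^{\circ} \to \Delta$ is a proper submersion (the critical set of $\pi$ and the singular points of $\bV_\Delta$ above $P_i$ all lie in $\cN^{\circ}$), Ehresmann's theorem gives a locally trivial fibration over the contractible $\Delta$, so that $V_\e \setminus \cN^{\circ} \hookrightarrow \bV_\Delta \setminus \cN^{\circ}$ is a homotopy equivalence. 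A standard excision then identifies
\[ H^{\curlyvee}_{*}(V) \cong H_*(\cN, \cN \cap V_\e) \cong \bigoplus_{r\in R} H_*(B_r, B_r \cap V_\e) \,\oplus\, \bigoplus_i H_*(T_i, T_i \cap V_\e). \]
By Lemma~\ref{p:concIsol} the first summand is already concentrated in degree $n+1$, so the problem reduces to proving concentration in degrees $n+1$ and $n+2$ for each curve contribution $H_*(T_i, T_i \cap V_\e)$.

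Next, I would stratify each $\Sigma_i$ by the finite set of distinguished points $P_i \cup Q_i \cup \Sing(\Sigma_i)$. Over the complement $\Sigma_i^{*} := \Sigma_i \setminus (P_i \cup Q_i \cup \Sing(\Sigma_i))$, the pair $(T_i, T_i \cap V_\e)$ restricts to a locally trivial pair-bundle with fibre $(\tE_i, \tF_i)$; by Lemma~\ref{l:localrelative} this fibre has relative homology concentrated in degree $n$, isomorphic to the transversal Milnor lattice of rank $\tmu_i$ twisted by the vertical monodromy $A_i$. At each distinguished point the fibre degenerates to a local Milnor pair whose relative homology lies in degrees $n$ and $n+1$ by Proposition~\ref{t:varladder}. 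Assembling these contributions over a cell decomposition of $\Sigma_i$ adapted to the distinguished points, via a Leray/Mayer--Vietoris argument, would a priori yield contributions to $H_*(T_i, T_i \cap V_\e)$ in degrees $n$, $n+1$, and $n+2$, the fibre-degree $n$ being combined with the cellular dimensions $0, 1, 2$ of the (real) $2$-dimensional base $\Sigma_i$.

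The hard part will be to show that the degree-$n$ piece vanishes globally. This should follow from the fact that the axis $A$ is contained in every $V_\e$ and meets each projective curve $\Sigma_i$ at the non-empty set $P_i$: near each axis point $p \in P_i$ the transversal vanishing cycles of $\tF_i$ are filled in by $(n-1)$-chains supplied by $A \cap V_\e$, killing the classes in $H_{n-1}(T_i \cap V_\e)$ generated by the transversal Milnor lattice and, via the long exact sequence of the pair, forcing $H_n(T_i, T_i \cap V_\e) = 0$. Making this cancellation rigorous should require the explicit CW-complex model of the pair $(\bV_\Delta, V_\e)$ announced for \S\ref{ss:cw}--\S\ref{ss:cw2}, in which the cells attached along the axis points, the special transversal types, the curve singularities and the regular part of $\Sigma_i$ assemble into a relative CW structure whose cells appear only in dimensions $n+1$ and $n+2$.
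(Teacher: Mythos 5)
Your proposal follows essentially the same route as the paper: retract $(\bV_\Delta,V_\e)$ onto local Milnor data near $\Sing(\pi)$, use the local transversal theory (Lemma~\ref{l:localrelative}, Proposition~\ref{t:varladder}) to concentrate each piece in degrees $n$ and $n+1$, assemble by Mayer--Vietoris over a cell decomposition of each $\Sigma_i$ to get a priori concentration in $n,n+1,n+2$, and kill the degree-$n$ term using the axis points via the CW-model. The one caveat is that your heuristic for the last step --- vanishing cycles ``filled in'' by $A\cap V_\e$, then the long exact sequence of $(T_i, T_i\cap V_\e)$ --- is not by itself a complete argument: the paper's rigorous version proves surjectivity of the Mayer--Vietoris map $j=j_1\oplus j_2$ on $H_n$, where $j_1$ is handled by the relative variation sequence at the \emph{special} points (Proposition~\ref{p:varladderRel}) and $j_2$ by the triviality of the vertical monodromy at the \emph{axis} points together with Lemma~\ref{l:B_i}; you only flag the axis contribution, though you correctly defer the rigor to the CW-complex model of \S\ref{ss:cw}--\S\ref{ss:cw2}, which is indeed where the argument closes.
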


Let $V := \{ f= 0 \} \subset \bP^{n+1}$ denote a hypersurface of degree $d$ with singular locus $\hat \Sigma$ of dimension one, more precisely $\hat \Sigma$ consists of a union $\Sigma := \cup_i \Sigma_i \cup R$ of irreducible projective curves $\Sigma_i$ and of a finite set of points $R$. 

 We recall that we have denoted by $A = \{ f=h_d=0\}$ the axis of the pencil $\pi : \bV_\Delta \to \Delta$ defined in the Introduction.
One considers the \emph{polar locus}\footnote{the polar locus of a map $(h,f)$ is defined as $\overline{\Sing(h,f) \m (\Sing h \cup \Sing f)}$} of the map $(h_d, f) : \bC^{n+2} \to \bC^2$ and since this is a homogeneous set one takes its image in $\bP^{n+1}$ which will be denoted by $\Gamma(h_d, f)$. 
%%%%%%%%%%%%%
Let us recall from \cite{ST-bettimax} the meaning of ``general'' for $h_d$ in this setting.
By using the Veronese embedding of degree $d$ we find a Zariski open set $\cO$ of linear functions in the target such that whenever $g\in \cO$ then its pull-back is a general homogeneous polynomial $h_d$ defining a hypersurface $H := \{ h_d = 0\}$ which is transversal to $V$ in the stratified sense, i.e. after endowing $V$ with some Whitney stratification, of which the strata are as follows:  the isolated singular points  $\{ \{ r \}, r \in R\}$ of $V$ and the point-strata $\{ \{ q \}, q \in Q\}$ in $\Sigma$, the components of $\Sigma \m Q$ and the open stratum $V\m \hat\Sigma$. Such $h_d$ will be called \textit{general}. This definition implies that $A$ intersects $\hat\Sigma$ at general points, in particular does not contain any points of $Q \cup R$.  It was shown in \cite[Lemma 5.1]{ST-bettimax} that the space $\bV_\Delta$ has isolated singularities: $\Sing \bV_\Delta = (A \cap \Sigma) \times \{ 0\}$, 
and  that $\pi : \bV_\Delta \to \Delta$ is a map with 1-dimensional singular locus $\Sing (\pi) = \hat \Sigma \times \{ 0\}$. One of the key preliminary results is the following supplement to \cite[Lemma 5.2]{ST-bettimax}, which extends the proof in \emph{loc.cit.} from Euler characteristic to homology\footnote{A related result was obtained in \cite{PP}. Like in case of \cite{PP}, the proof actually works for any singular locus $\Sing V$ and any general pencil.}:
\begin{lemma}\label{l:B_i}   
If $h_d$ is general then $\Gamma_p(h_d, f)= \emptyset$ at any point $p\in A\times \{ 0\}$. In particular, for a small enough ball $B_p$ centered at $p$, the local relative homology is trivial, i.e.:
\[ H_*(B_p, B_p\cap V_\e) = 0.  \]
\end{lemma}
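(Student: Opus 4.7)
I will deduce both assertions from two ingredients: a Bertini-type dimension count which pushes the finite polar set off the axis, and a local analysis showing that, once $\Gamma_p(h_d,f)=\emptyset$, both $B_p$ and $B_p\cap V_\e$ are contractible.

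\emph{Step 1 (polar avoidance).} In $\bC^{n+2}$ the polar set of $(h_d,f)$ is the rank-$\le 1$ degeneracy locus of a $2\times(n+2)$ Jacobian matrix outside $\Sing\{f=0\}\cup\Sing\{h_d=0\}$, so it has codimension $\ge n+1$ and dimension $\le 1$; projectivising it becomes a finite set $\Gamma(h_d,f)\subset\bP^{n+1}$. Since $\dim A=n-1$, a naive dimension count predicts $\Gamma(h_d,f)\cap A=\emptyset$. To make this genuine I plan to combine the Veronese embedding of degree $d$ with an incidence/Bertini argument in the Zariski open set $\cO$ of hyperplanes in $\bP^N$ used to define ``general $h_d$'', in exact parallel with the Euler-characteristic proof of \cite[Lem. 5.2]{ST-bettimax}, lifted from invariants to spaces.

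\emph{Step 2 (local model and smoothness of $V_\e$).} Given $p\in A\times\{0\}$, two cases arise because $A$ avoids $Q\cup R$: either $p\notin\Sing V$, or $p$ is a generic point of some irreducible branch $\Sigma_i$ of $\Sing V$. In the latter case the stratified transversality of $H=\{h_d=0\}$ to $\Sigma_i$ gives $dh_d|_{\Sigma_i}(p)\ne 0$, so I can take local coordinates $(z_1,\ldots,z_n,w)$ on $\bP^{n+1}$ near $p$ with $h_d=w$ and $\Sigma_i=\{z=0\}$; then $\bV_\Delta=\{F(z,w,\e):=f(z,w)+\e w=0\}\subset\bC^{n+2}$. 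The critical locus of $F|_{\{\e=\e_0\}}$ is $\{\partial f/\partial z=0\}\cap\{\partial f/\partial w=-\e_0\}$; by Step 1 the first factor reduces locally to $\Sigma$, and on $\Sigma$ the identity $f\equiv 0$ forces $\partial f/\partial w\equiv 0$, which makes the second equation require $\e_0=0$. Hence for every small $\e\ne 0$, $V_\e$ is smooth in a whole neighbourhood of $p$. The case $p\notin\Sing V$ is analogous and simpler: $df(p)$ and $dh_d(p)$ are linearly independent, and the same computation yields smoothness of both $\bV_\Delta$ at $p$ and of $V_\e$ near $p$.

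\emph{Step 3 (contractibility) and main obstacle.} For a small enough ball $B$ around $p$ in $\bC^{n+2}$ I claim $B_p$ and $B_p\cap V_\e$ are both contractible. The first holds because $\bV_\Delta$ is either smooth at $p$ (smooth case) or, by \cite[Lem. 5.1]{ST-bettimax}, has an isolated singularity there, in which case Milnor's conic-structure theorem for complex analytic hypersurface germs presents $B_p$ as a cone on its link. For the second, note $p\in A\subset V_\e$ for every $\e$, so $V_\e$ passes through the centre of $B$; Step 2 says it is smooth throughout $B$; hence the standard local conic structure of smooth analytic subvarieties identifies $B\cap V_\e$ with a topological ball. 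Combining these via the long exact sequence of the pair gives $H_*(B_p,B_p\cap V_\e)=0$. The crux is really Step 1, i.e., translating the naive dimension count into a genuine Bertini/incidence statement compatible with the precise Veronese-based notion of ``general $h_d$'' already in use. Once that is secured, Steps 2 and 3 reduce to the implicit function theorem and Milnor's conic structure, with only the mild bookkeeping subtlety that $B$ must be chosen small before $\e$ is taken small, so that $V_\e$ is genuinely smooth throughout $B$ and not merely at $p$.
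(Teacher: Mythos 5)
Your Step 3 contains the essential gap, and it is exactly the step where the paper does genuine work. You assert that since $V_\e$ is smooth and passes through $p_0\in A$, the ``standard local conic structure of smooth analytic subvarieties identifies $B\cap V_\e$ with a topological ball.'' But the conic structure theorem supplies a ball whose radius depends on the analytic set in question. Here $V_\e$ is a family of smooth hypersurfaces degenerating, as $\e\to 0$, to the singular $V_0$ whose singular set passes through $p_0$, while $B_p$ is a Milnor ball for the germ of $\pi$ at $p$ and must be fixed \emph{before} $\e$ is taken small. The intersection $B_p\cap V_\e$ is therefore the Milnor fibre of $\pi:(\bV_\Delta,p)\to(\Delta,0)$, which is in general not contractible for a function germ with $1$-dimensional critical locus. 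To see concretely why ``smooth through the centre'' does not imply contractibility of a fixed-radius slice, consider $f=x^2-y^2$, $h=y$, $p_0=0$: then $V_\e=\{x^2-y^2-\e y=0\}$ is smooth for $\e\ne 0$, passes through $0$, yet $V_\e\cap B(0,\delta)$ is an annulus for $0<|\e|\ll\delta$. This configuration is excluded in the lemma only because $h$ is not general there (the axis hits an isolated singular point), i.e.\ precisely because the polar set $\Gamma(f,h)$ is nonempty at $p_0$. Your argument never invokes the emptiness of $\Gamma_p(h_d,f)$ for this contractibility step; you use it only in Step 2 to re-derive smoothness of $V_\e$, which is already built into the definition of $\Delta$.

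The paper's proof uses $\Gamma_p(h_d,f)=\emptyset$ in a completely different and essential way: the emptiness of the polar locus of the pair $(\pi, h_d)$ on $B_p$ forces $B_p\cap V_\e$ to deformation retract onto its hyperplane slice $B_p\cap V_\e\cap\{h_d=0\}$ (a Lefschetz-type attaching argument with no cells to attach). That slice is then identified with the complex link of $\bV_\Delta\cap\{h_d=0\}=\{f=0\}\times\Delta$, which is contractible because $\{f=0\}\times\Delta$ is a product. This is the mechanism that turns the first assertion of the lemma into the second, and it is precisely what is missing from your outline. Your Step 1 is fine as a pointer (the paper also defers it to \cite[(12)]{ST-bettimax}), and your observation that $B_p$ itself is contractible because $\bV_\Delta$ has at worst an isolated singularity at $p$ agrees with the paper; but the heart of the proof, namely the contractibility of the Milnor fibre $B_p\cap V_\e$, needs the polar-locus retraction, not an appeal to conic structure of a single smooth variety.
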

\begin{proof}
 The fist claim has been proved in \cite[(12)]{ST-bettimax}. Let us show here the second one.
The notation $B_p$ stands for the intersection of  $\bV_\Delta$ with a small ball in some chosen affine chart $\bC^{n+1}\times \Delta$ of the ambient space $\bP^{n+1}\times \Delta$. In particular $B_p$ is of dimension $n+1$.
 Consider the map $(\pi, h_d) :  B_p \to \Delta \times \Delta'$. Consider the germ of the polar locus of this map at $p$, denoted by $\Gamma(\pi, \hat h_d)$, where $\hat h_d$ is the de-homogenization of $h_d$ in the chosen chart.
It follows from the definition of the polar locus that some point $(x,\e) \in \bV_\Delta$, where $\e = - f(x)/h_d(x)$, is contained in  $\Gamma(\pi, h_d) \setminus (\{ f=0\} \cup \{ h_d=0\})$ if and only if $x \in \Gamma(f, h_d) \setminus (\{ f=0\} \cup \{ h_d=0\})$.
By the first statement, $\Gamma(f, h_d)$ is empty at $p$.
The absence of the polar locus implies that $B_p\cap V_\e$ is homotopy equivalent (by deformation retraction) to the space $B_p\cap V_\e \cap \{ h_d=0\}$. The latter is the slice by $\e =$ constant of the space $\bV_\Delta \cap \{ h_d=0\} = \{ f=0\} \times \Delta$, which is a product space. Since this is homeomorphic to the complex link of this space and a product space has contractible complex link, we deduce that $B_p\cap V_\e$ is contractible too. Since $B_p$ is contractible itself, we get our claim.
\end{proof}

\subsection{Notations}\label{ss:notations}
 Let us assume for the moment that $\Sigma$ is irreducible and discuss the reducible case at the end in \S \ref{ss:reducible}. Let $g$ be its genus, in the sense of the definition given at \S \ref{ss:cw2}.
We use the following notations: \\
$P := A\cap \Sigma$ the set of axis points of $\Sigma$;  $Q:=$  the set of special points on $\Sigma$; \\
 $R:=$  the set of isolated singular points. 
\\
 $\Sigma^* := \Sigma \m (P \cup Q)$ and
 $\cY :=$ small enough  tubular neighborhood of $\Sigma^*$ in $\bV_\Delta$.\\
$B_p, B_q, B_r$ are small enough Milnor balls within $\bV_\Delta \subset \bP^{n+1}\times \Delta$  at the points $p \in P, q \in Q, r \in R$ respectively, and\\
$B_P := \bigsqcup_p B_p$, $B_Q := \bigsqcup_q B_q$ and $B_R := \bigsqcup_r B_r$.\\
Let us denote the projection of the tubular neighborhood by $\pi_\Sigma : \cY \to \Sigma^*$.

\medskip
Let  $\nu := \# P$ be the number of axis points. 
 At any special point $q\in Q$, let $S_q$ be the index set of locally irreducible branches of the germ $(\Sigma,q)$, and let $\gamma := \sum_{q\in Q}\# S_q$.

By homotopy retraction and by excision we have:
\begin{equation}\label{eq:directsumdecomp}
  H_*(\bV_\Delta, V_\e) \simeq H_*(\cY \cup B_P \cup B_Q,  V_\e \cap \cY \cup  B_P \cup  B_Q) \bigoplus \oplus_{r\in R} H_*(B_r,V_\e \cap B_r).
\end{equation}

\medskip
\noindent
We introduce the following shorter notations:\\
\[ \cX := B_P \sqcup  B_Q \; , \; \cA := V_\e \cap \cX  \; , \;  
 \cB :=  V_\e \cap \cY \; , \;
\cZ := \cX\cap \cY \; , \; \cC := \cA\cap \cB \]
 \[ (\cX_p,\cA_p) := (B_p,V_\e \cap B_p) \; , \;
(\cX_q,\cA_q) := (B_q,V_\e \cap B_q) \; . \]

\smallskip

 In the new notations, the first direct summand of \eqref{eq:directsumdecomp} is
$H_*(\cX\cup \cY, \cA\cup \cB)$, thus \eqref{eq:directsumdecomp} writes as follows:
\begin{equation}\label{eq:directsumdecomp2}
  H_*(\bV_\Delta, V_\e) \simeq H_*(\cX\cup \cY, \cA\cup \cB) \bigoplus \oplus_{r\in R} H_*(B_r,V_\e \cap B_r).
\end{equation}

Note that each direct summand $H_*(B_r,V_\e \cap B_r)$ is concentrated in dimension $n+1$ since it identifies to the Milnor lattice of the isolated singularities germs $(V_0, r)$, where $\mu_r$ denotes its Milnor number.
This aspect was treated in \S \ref{s:intro} in case of isolated singularities. We shall therefore deal from now on with the first term in the direct sum of \eqref{eq:directsumdecomp}. 

We next consider the relative Mayer-Vietoris long exact sequence:
\begin{equation}\label{eq:mv}
 \cdots \to H_{*}(\cZ,\cC) \to H_{*}(\cX,\cA) \oplus H_{*}(\cY,\cB) \to  H_{*}(\cX\cup \cY, \cA\cup \cB) \stackrel{\partial_s}{\to} \cdots
\end{equation}
 of the pair $(\cX\cup \cY, \cA\cup \cB)$ and we compute in the following each term of it.

\subsection{The homology of $(\cX,\cA)$}\label{ss:term}
One has the direct sum decomposition $H_{*}(\cX,\cA) \simeq \oplus_p H_*(\cX_p,\cA_p) \bigoplus  \oplus_q H_*(\cX_q,\cA_q)$ 
since $\cX$ is a disjoint union. The triviality $H_*(\cX_p,\cA_p)= 0$ follows by Lemma \ref{l:B_i}.
The pairs $(\cX_q,\cA_q)$ are local Milnor data of the germs $(V, q)$ with 1-dimensional singular locus and therefore the relative homology $H_*(\cX_q,\cA_q)$ is concentrated in dimensions $n$ and $n+1$.

\subsection{The homology of $(\cZ,\cC)$}\label{ss:term2}
The pair $(\cZ,\cC)$ is a disjoint union of pairs localized at points $p \in P $ and $q \in Q$.
For axis points $p \in P$ we have a unique pair $(\cZ_p, \cC_p)$ as bundle over the link of $\Sigma$ at $p$ with fibre the transversal data $(\tE_p, \tF_p)$, in the notations of \S \ref{ss:localtheory}. For the non-axis points $q\in Q$ we have one contribution for each {\it locally irreducible branch of the germ $(\Sigma,q)$}. Let  $S_q$ be the index set of all these branches at $q\in Q$. We get the following decomposition:
\begin{equation}\label{eq:decomp}
  H_{*}(\cZ,\cC) \simeq \oplus_{p\in P} H_{*}(\cZ_p, \cC_p) \bigoplus \oplus_{q\in Q} 
\oplus_{s\in S_q} H_{*}(\cZ_s, \cC_s).
\end{equation}

More precisely, one such local pair $(\cZ_s, \cC_s)$ is the bundle over the corresponding component of the link of the curve germ $\Sigma$ at $q$ having as fibre the local transversal Milnor data $(\tE_s, \tF_s)$.  
In the notations of \S \ref{ss:localtheory}, we thus have:
 $\partial_2 \cA_q =\mathop{\bigsqcup}_{s \in S_q} \cC_s $.

The relative homology groups in the above decomposition \eqref{eq:decomp} depend on the \emph{vertical monodromy} via the Wang sequence of Lemma \ref{l:localrelative}, as follows:
\begin{equation}\label{eq:wang2}
 0 \rightarrow H_{n+1} (\cZ_s,\cC_s)) \rightarrow H_{n} (\tE,\tF)
\stackrel{A_s -I}{\rightarrow} H_{n} (\tE,\tF) \rightarrow H_{n}
(\cZ_s,\cC_s) \rightarrow 0.
\end{equation}
 
Note that here the transversal data is independent of the points $q$ or the index $s$ since
$\Sigma^*$ is connected and therefore the transversal fibre is uniquely defined. However the vertical monodromies $A_s$ depend of $s\in S_q$.
From the above and from Lemma \ref{l:B_i} we get:
\begin{lemma}\label{l:concentration}
At points $q \in Q$,  for each $s\in S_q$ one has:
\begin{eqnarray*}
  H_{k}(\cZ_s,\cC_s ) = 0 &  k\not= n, n+1,\\
H_{n+1}(\cZ_s,\cC_s) \cong {\ker} \; (A_s- I), &  H_{n}(\cZ_s,\cC_s ) \cong {\coker} \; (A_s- I).
\end{eqnarray*}

At axis points $p \in P$ and more generally, at any point $p$ such that $A_p = I$, one has:
\begin{eqnarray*}
 H_{k}(\cZ_p, \cC_p) = 0 &  k\not= n, n+1,\\
H_{n+1}(\cZ_p, \cC_p) \cong H_{n}(\cZ_p, \cC_p) &  \cong H_n(\tE,\tF) = \bZ^{\tmu}.
\end{eqnarray*}
\end{lemma}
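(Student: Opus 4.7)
The proof applies Lemma \ref{l:localrelative} directly, together with the standard concentration of the transversal Milnor fibre's homology. Since $\tE$ is contractible and $\tF$ is the Milnor fibre of an isolated hypersurface singularity in $\bC^n$ of Milnor number $\tmu$, Milnor's theorem gives that $\tilde H_{*}(\tF)$ is concentrated in degree $n-1$ with $H_{n-1}(\tF) = \bZ^{\tmu}$ free. The long exact sequence of the pair $(\tE, \tF)$ then reads $H_n(\tE, \tF) \cong \bZ^{\tmu}$ and $H_k(\tE, \tF) = 0$ for every $k \neq n$.

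For $q \in Q$ and $s \in S_q$, the pair $(\cZ_s, \cC_s)$ is by construction a fibre bundle over the $s$-th circle component of the link of $\Sigma$ at $q$, with fibre pair $(\tE, \tF)$ and vertical monodromy $A_s$. The full Wang-type long exact sequence of this bundle of pairs (whose non-trivial four-term segment is the content of Lemma \ref{l:localrelative}), combined with the vanishing of $H_k(\tE, \tF)$ for $k \neq n$, immediately gives $H_k(\cZ_s, \cC_s) = 0$ for every $k \neq n, n+1$, while the remaining four-term piece
\[ 0 \to H_{n+1}(\cZ_s, \cC_s) \to \bZ^{\tmu} \stackrel{A_s - I}{\longrightarrow} \bZ^{\tmu} \to H_n(\cZ_s, \cC_s) \to 0 \]
identifies $H_{n+1}(\cZ_s, \cC_s) = \ker(A_s - I)$ and $H_n(\cZ_s, \cC_s) = \coker(A_s - I)$ by exactness.

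For an axis point $p \in P$, the genericity of $h_d$ forces $A$ to avoid the point set $Q \cup R$, so $p$ lies on the open stratum of $\Sigma$ where $\Sigma$ is smooth and carries the generic transversal type. The local system of transversal Milnor fibres therefore extends across a small disk in $\Sigma$ centered at $p$, whence the vertical monodromy $A_p$ along the link of $\Sigma$ at $p$ is the identity; this is exactly the assumption ``$A_p = I$'' covering the ``more generally'' case in the statement. Substituting $A_p - I = 0$ in the four-term Wang sequence above yields the two isomorphisms $H_{n+1}(\cZ_p, \cC_p) \cong H_n(\cZ_p, \cC_p) \cong \bZ^{\tmu}$. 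The only delicate point in this plan, and the one I expect to be the main obstacle, is this triviality of the vertical monodromy at axis points; it is the unique step where the genericity of $A$ is used, and it follows from the standard fact that the transversal local system is locally trivial on the open generic stratum of $\Sigma$. All remaining steps are routine bookkeeping on the Wang sequence.
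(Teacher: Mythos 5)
Your proof is correct and follows essentially the same route as the paper: apply the relative Wang sequence (Lemma~\ref{l:localrelative} / equation \eqref{eq:wang2}) together with the concentration of $H_*(\tE,\tF)$ in degree $n$, and for axis points observe that genericity places them in the open stratum of $\Sigma$ so that $A_p = I$. The paper's proof is a two-line version of exactly this argument.
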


\begin{proof}
The first statement follows from the Wang sequence \eqref{eq:wang2} and since $H_{k} (\tE,\tF)$ is concentrated in $k=n$. The last statement follows because 
 the axis points $p\in P$ are general points of $\Sigma$ and therefore  the local vertical monodromy $A_p$ is the identity.
\end{proof}

We conclude that $H_*(\cZ,\cC)$ is concentrated in dimensions $n$ and $n+1$ only.
\smallskip

\subsection{The CW-complex structure of $(\cZ,\cC)$} \label{ss:cw}
The pair $(\cZ_s,\cC_s)$ has moreover the following structure of a relative CW-complex, up to homotopy.
Each bundle over some circle link can be obtained from a trivial bundle over an interval by identifying the fibres above the end points via the geometric vertical monodromy $A_s$.
In order to obtain $\cZ_s$ from $\cC_s$ one can start by first attaching $n$-cells  $c_{1}, \ldots, c_{\tmu}$ to the fibre $\tF$ in order to kill the $\tmu$ generators of $H_{n-1}(\tF)$ at the identified ends,  and next by attaching $(n+1)$-cells $e_{1}, \ldots, e_{\tmu}$ to the preceding  $n$-skeleton. The attaching of some $(n+1)$-cell is as follows: consider some $n$-cell $a$ of the $n$-skeleton and take the cylinder $I\times a$ as an $(n+1)$-cell. Fix an orientation of the   
circle link, attach the base $\{ 0\}\times a$ over $a$, then follow the circle bundle in the fixed orientation by the monodromy $A_s$ and attach the end $\{ 1\}\times a$ over $A_s(a)$.
At the level of the cell complex, the boundary map of this attaching identifies to $A_s-I : \bZ^{\tmu} \to \bZ^{ \tmu}$.

\subsection{The CW-complex structure of $(\cY,\cB)$} \label{ss:cw2}

 For technical reasons we introduce one more puncture on $\Sigma$.  Let us therefore define the total set of punctures $T := P \sqcup Q \sqcup \{ y\}$, where $y$ is a general point of $\Sigma$, then redefine $\Sigma^* := \Sigma\m T$ by considering the new puncture $y$. 
%N.B.This effects also the definitions of  $(\cX,\cA)$, $(\cY,\cB)$ and $(\cZ,\cX)$. 

 Let $n : \tilde \Sigma \to \Sigma$ be the normalization map. Then we have the isomorphism $\Sigma^* = \Sigma \m T \simeq \tilde \Sigma \m n^{-1}(T)$.  We choose generators of $\pi_1(\Sigma^*, z)$ for some base point $z\in \Sigma^*$ as follows: first the $2g$ loops (called \emph{genus loops} in the following) which are generators of  $\pi_1(\tilde\Sigma, n^{-1}(z))$, where $g$ denotes the genus of the normalization $\tilde \Sigma$, and next by choosing one loop for each puncture of $P$ and of $Q$. The total set of loops is indexed by the set $T' = T \m \{ y\}$.  
Let us denote by $W$ the set of indices for the union of $T'$ with the genus loops, and therefore $\# W = 2g + \nu +\gamma$, where $\nu := \# P$ and $\gamma := \sum_{q\in Q}\# S_q$ (recalling the Notations \S\ref{ss:notations}).
By enlarging ``the hole'' defined by the puncture $y$, we retract $\Sigma^*$ to the chosen bouquet configuration of non-intersecting loops, denoted by $\Gamma$. The number of loops is $2g + \nu + \gamma $.
Note that $\nu >0$ since there must be at least $d$ ``axis points''.

 The pair $(\cY,\cB)$ is then homotopy equivalent (by retraction) to the pair  $(\pi_\Sigma^{-1}(\Gamma),\cB\cap \pi_\Sigma^{-1}(\Gamma))$. We endow the latter with the structure of a relative CW-complex as we did with $(\cZ,\cC)$  at \S \ref{ss:cw}, namely for each loop the similar CW-complex structure as we have defined above for some pair $(\cZ_s,\cC_s)$, see Figure \ref{f:1}.
\begin{figure}[hbtp]
\begin{center}
\epsfxsize=5cm  %5cm 
\leavevmode
\resizebox{!}{2.3in}{\includegraphics{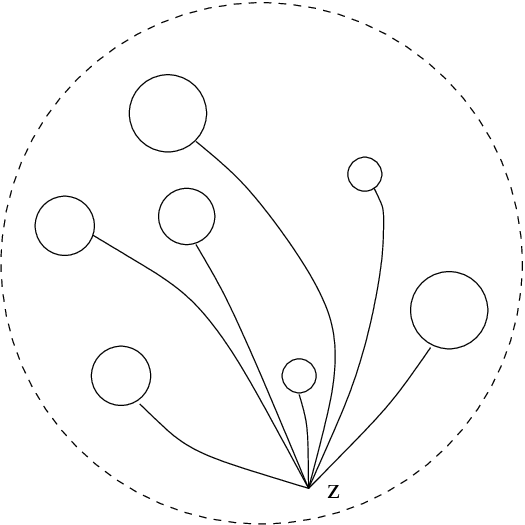}} %1.75in
\end{center}
\caption{{\em Retraction of the surface $\Sigma^*$
 }}
\label{f:1}
\end{figure}
The difference is that the pairs $(\cZ_s,\cC_s)$ are disjoint whereas in $\Sigma^*$ 
the loops meet at a single point $z$. We thus  take as reference the transversal fibre $\tF  = \cB \cap\pi_\Sigma^{-1}(z)$ above the point $z$, namely we attach the $n$-cells (thimbles) only once to this single fibre in order to kill the $\tmu$ generators of $H_{n-1}(\tF)$. 
The $(n+1)$-cells of $(\cY,\cB)$ correspond to the fibre bundles over the loops in the bouquet model of $\Sigma^*$. Over each loop,  one attaches a number $\tmu$ of $(n+1)$-cells to the fixed $n$-skeleton described before, more precisely one $(n+1)$-cell over one $n$-cell generator of the $n$-skeleton.
We extend the notation $(\cZ_j,\cC_j)$ to genus loops, although they are not contained in $(\cZ,\cC)$.

 The attaching map of the $(n+1)$-cells corresponding to the bundle over some loop
  can be identified with $A_j -I : \bZ^{\tmu} \to \bZ^{\tmu}$,
where the local system monodromies $A_j$ corresponding to loops may not be local monodromies, and where $\bZ^{\tmu}$ is the homology group $H_{n-1}(\tF)$ of the transversal fibre over $z$ and hence the same for each loop.

From this CW-complex structure we get the following precise description in terms of the local monodromies of the transversal local system:   
\begin{lemma}\label{p:concentration}
\[\begin{array}{l}
 H_{k}(\cY,\cB) = 0   \mbox{ if } k \ne n, n+1,   \\
H_{n}(\cY,\cB) \simeq  \bZ^{\tmu} /  \langle \im (A_j-I) \mid  j\in W \rangle ,\\
 H_{n+1}(\cY,\cB) \mbox{ is free of rank } (2g + \nu +\gamma -1) \tmu + \rank H_{n}(\cY,\cB) \le (2g + \nu +\gamma ) \tmu,\\
 H_{n+1}(\cY,\cB) \mbox{ naturally contains } 
 \bigoplus_{j \in W }H_{n+1}(\cZ_j, \cC_j)\mbox{ as a direct summand, } \\
\chi(\cY,\cB) = (-1)^{n-1}(2g +\nu + \gamma -1) \tmu.

\end{array}\]
%%\fin
\end{lemma}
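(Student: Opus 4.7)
The plan is to work entirely with the relative cellular chain complex supplied by the CW-structure of \S\ref{ss:cw2}. Up to homotopy, $(\cY,\cB)$ is obtained from $\cB$ by attaching $\tmu$ $n$-cells (the thimbles) to the distinguished fibre $\tF$ above $z$, and then, for each loop $j\in W$, a block of $\tmu$ $(n+1)$-cells whose cellular attaching map is $A_j - I : \bZ^{\tmu} \to \bZ^{\tmu}$. The relative cellular chain complex is thus concentrated in degrees $n$ and $n+1$, with
\[ C_n = \bZ^{\tmu}, \qquad C_{n+1} = \bigoplus_{j\in W} \bZ^{\tmu}, \]
and boundary operator $\partial$ that acts as $A_j-I$ on the $j$-th summand. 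This immediately yields $H_k(\cY,\cB)=0$ for $k\ne n,n+1$, the formula $H_n(\cY,\cB)=\coker\partial=\bZ^{\tmu}/\langle\im(A_j-I)\mid j\in W\rangle$, and, by counting cells,
\[ \chi(\cY,\cB)=(-1)^n\tmu+(-1)^{n+1}|W|\tmu=(-1)^{n-1}(2g+\nu+\gamma-1)\tmu. \]

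Next, $H_{n+1}(\cY,\cB)=\ker\partial$ is a subgroup of the free abelian group $C_{n+1}$, hence itself free. Its rank follows from the short exact sequences $0\to\ker\partial\to C_{n+1}\to\im\partial\to 0$ and $0\to\im\partial\to C_n\to\coker\partial\to 0$ by additivity of rank:
\[ \rank H_{n+1}(\cY,\cB) = |W|\tmu - \bigl(\tmu-\rank H_n(\cY,\cB)\bigr) = (|W|-1)\tmu + \rank H_n(\cY,\cB), \]
and the trivial bound $\rank H_n(\cY,\cB)\le\tmu$ gives the upper bound $|W|\tmu = (2g+\nu+\gamma)\tmu$.

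For the direct-summand claim, the cellular description of each pair $(\cZ_j,\cC_j)$ from \S\ref{ss:cw} (extended to genus loops as in \S\ref{ss:cw2}) identifies $H_{n+1}(\cZ_j,\cC_j)$ with $\ker(A_j-I)$, and the natural map $\bigoplus_{j\in W}H_{n+1}(\cZ_j,\cC_j)\to H_{n+1}(\cY,\cB)$ comes from the inclusion of each $\ker(A_j-I)$ into the $j$-th summand of $C_{n+1}$. Choosing, for each $j$, a splitting $\bZ^{\tmu}=\ker(A_j-I)\oplus K_j$ (possible because $\ker(A_j-I)$ is free), I decompose any $(x_j)\in\ker\partial$ uniquely as $(y_j)+(z_j)$ with $y_j\in\ker(A_j-I)$ and $z_j\in K_j$; since $\partial$ annihilates $(y_j)$ blockwise, the residual element $(z_j)$ lies in $\ker\partial$ automatically. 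This yields the internal decomposition
\[ \ker\partial = \bigoplus_{j\in W}\ker(A_j-I) \;\oplus\; \Bigl\{(z_j)\in\bigoplus_{j\in W}K_j \ \Big|\ \sum_{j\in W}(A_j-I)(z_j)=0\Bigr\}, \]
exhibiting $\bigoplus_jH_{n+1}(\cZ_j,\cC_j)$ as a direct summand of $H_{n+1}(\cY,\cB)$.

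The one genuine subtlety is this last splitting: one must check that decomposing block-by-block is compatible with the global kernel condition on $\partial$, which works precisely because $\partial$ itself has the diagonal block form $\bigoplus_j(A_j-I)$. Everything else is routine bookkeeping with the cellular chain complex described in \S\ref{ss:cw2}.
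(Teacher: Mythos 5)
Your proof is correct and follows essentially the same route as the paper's: both arguments reduce everything to the relative cellular chain complex of the CW-model from \S\ref{ss:cw2}, concentrated in degrees $n$ and $n+1$ with block-diagonal boundary $\bigoplus_{j\in W}(A_j-I)$, which the paper packages as the commutative diagram \eqref{eq:commut} rather than writing the chain complex directly. The one place you go a step beyond the paper's wording is the direct-summand claim: where the paper, after exhibiting the injection $\bigoplus_{j\in W}H_{n+1}(\cZ_j,\cC_j)\hookrightarrow H_{n+1}(\cY,\cB)$, simply asserts ``it follows that the image is a direct summand,'' you supply the actual splitting by choosing complements $K_j$ of $\ker(A_j-I)$ in each block and checking compatibility with the kernel condition, which is a genuine (if small) tightening of the exposition since injectivity alone does not yield a direct summand in general.
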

\begin{proof}
%We need to justify somewhat more the third statement only. 
The relative CW-complex model of $(\cY,\cB)$ contains only cells in dimension $n$ and $n+1$. 
At the level $n+1$, the chain group is generated 
by all $(n+1)$-cells corresponding to elements of $W$.  Then $H_{n+1}(\cY,\cB)$  identifies to the kernel of the boundary map $\partial$ in the second row of the following commuting diagram of exact sequences (provided by Lemma \ref{l:localrelative} and by \eqref{eq:wang2}), where the vertical arrows are induced by inclusion: 

\begin{equation}\label{eq:commut}
 \begin{array}{cccccccccccc}
 0 & \to & H_{n+1}(\cZ_j, \cC_j) & \hookrightarrow &  H_{n}(\tE_j, \tF_j) & \stackrel{\partial_j}{\rightarrow} & H_{n}(\tE_j, \tF_j) & \rightarrow & H_{n}(\cZ_j, \cC_j) & \to & 0   \\ 
 \ & \ &  \downarrow & \ & \downarrow & \ & \downarrow = & \ & \downarrow & \ & \  \\
0 & \to & H_{n+1}(\cY,\cB) & \hookrightarrow &  \oplus_{j\in W}H_{n}(\tE_j, \tF_j) & \stackrel{\sum_{j\in W}\partial_j}{\rightarrow} & H_{n}(\tE_j, \tF_j) & \rightarrow & H_{n}(\cY,\cB) & \to & 0

\end{array}
\end{equation}
For any $j\in W$ we get that the first vertical arrow is injective.
By taking the direct sum over $j\in W$ in the left hand commutative square of \eqref{eq:commut}   we get an injective map $\bigoplus_{j \in W}H_{n+1}(\cZ_j, \cC_j) \hookrightarrow H_{n+1}(\cY,\cB)$.
It follows that the image is a direct summand.

Counting the ranks in the lower exact sequence yields the above claimed formula for $\chi$.

%By Lemma \ref{l:B_i} (see also the proof of Lemma \ref{l:concentration}), the boundary map of the $(n+1)$-cells of $(\cZ_p,\cC_p)$ is zero, due to the triviality of the monodromy. So these cells generate a submodule of  $H_{n+1}(\cY,\cB)$ isomorphic to $\oplus_{p \in P}H_{n+1}(\cZ_p, \cC_p)$.
\end{proof}

%%%%%%%%%%%%%%%%%%%%%%
%%%%%%%%%%%%%%%%%%%%%%%%%%%%%%%%%%%%%%%%%%%%%%%%%%%%%%%%%%%%%%%%%%%%%%%%%%%%

\section{Concentration of the vanishing homology. Proof of Theorem \ref{t:main}}\label{ss:proofmain}
Lemma \ref{l:concentration}, \S \ref{ss:term} and Lemma \ref{p:concentration}
show that the  terms $H_{*}(\cX ,\cA)$, $H_{*}(\cY ,\cB)$ and $H_{*}(\cZ ,\cC)$ of the Mayer-Vietoris sequence \eqref{eq:mv} are concentrated in dimensions $n$ and $n+1$ only, which fact implies the following result:

\begin{proposition}\label{p:7-term}
The relative Mayer-Vietoris sequence \eqref{eq:mv} is trivial except for the following  7-terms sequence:
\begin{equation}\label{eq:MV7}\begin{array}{l}
 0 \to H_{n+2}(\cX \cup \cY , \cA \cup \cB) \to \ \ \ \ \  \\
 \ \ \ \ \ \to H_{n+1}(\cZ,\cC) \to H_{n+1}(\cX,\cA) 
\oplus H_{n+1}(\cY,\cB) \to  H_{n+1}(\cX \cup \cY , \cA \cup \cB) \to \\
 \ \ \ \ \ \to  H_{n}(\cZ,\cC)\stackrel{j}{\to} H_{n}(\cX,\cA) \oplus H_{n}(\cY,\cB)
 \to H_{n}(\cX \cup \cY , \cA \cup \cB) \to  0.
 \end{array}
\end{equation}\fin
\end{proposition}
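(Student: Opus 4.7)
The plan is to extract the nontrivial portion of the Mayer--Vietoris long exact sequence \eqref{eq:mv} by purely formal bookkeeping, using the concentration results already established. The three inputs I will use are: $H_k(\cX,\cA) = 0$ for $k \notin \{n, n+1\}$ (combining Lemma \ref{l:B_i}, which annihilates the axis-point contributions $H_*(\cX_p,\cA_p)$ entirely, with the observation in \S\ref{ss:term} that the local pairs $(\cX_q, \cA_q)$ at $q \in Q$ are Milnor data of one-dimensional singularities and hence have relative homology concentrated in degrees $n$ and $n+1$); $H_k(\cZ, \cC) = 0$ for $k \notin \{n, n+1\}$ by Lemma \ref{l:concentration}; and $H_k(\cY, \cB) = 0$ for $k \notin \{n, n+1\}$ by Lemma \ref{p:concentration}.

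First I would write down the relevant Mayer--Vietoris fragment
\[
H_k(\cZ,\cC) \to H_k(\cX,\cA) \oplus H_k(\cY,\cB) \to H_k(\cX\cup\cY,\cA\cup\cB) \to H_{k-1}(\cZ,\cC) \to H_{k-1}(\cX,\cA) \oplus H_{k-1}(\cY,\cB).
\]
For any $k \notin \{n, n+1, n+2\}$ both $H_k(\cX,\cA) \oplus H_k(\cY,\cB)$ and $H_{k-1}(\cZ,\cC)$ vanish: the former because $k \notin \{n, n+1\}$, the latter because $k-1 \notin \{n, n+1\}$. Exactness of the sequence then forces $H_k(\cX\cup\cY,\cA\cup\cB) = 0$, so the Mayer--Vietoris sequence contributes nothing outside degrees $n$, $n+1$, $n+2$.

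What remains is the concatenation of the three nontrivial fragments in degrees $k = n+2$, $n+1$, $n$, which I would splice into a single long sequence. At the top, it is capped by the vanishing of $H_{n+2}(\cZ,\cC)$ together with $H_{n+2}(\cX,\cA) \oplus H_{n+2}(\cY,\cB)$, producing the initial ``$0 \to H_{n+2}(\cX\cup\cY,\cA\cup\cB)$''; at the bottom it is capped by $H_{n-1}(\cZ,\cC) = 0$, producing the terminal ``$H_n(\cX\cup\cY,\cA\cup\cB) \to 0$''. This is precisely the seven-term sequence \eqref{eq:MV7}. There is no real obstacle: the entire argument is routine exact-sequence bookkeeping, once the three concentration lemmas are in hand. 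The only point of care is to verify that the boundary map $\partial_s$ appearing in \eqref{eq:mv} matches the connecting homomorphism used in \eqref{eq:MV7}, but this is automatic from the naturality of the Mayer--Vietoris construction.
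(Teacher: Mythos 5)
Your argument is correct and is exactly the approach taken in the paper: the authors simply note that Lemma \ref{l:concentration}, the discussion in \S \ref{ss:term}, and Lemma \ref{p:concentration} give concentration of $H_*(\cX,\cA)$, $H_*(\cY,\cB)$, $H_*(\cZ,\cC)$ in degrees $n$ and $n+1$, and leave the exact-sequence bookkeeping implicit. Your write-up just spells out that bookkeeping in detail.
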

From Proposition \ref{p:7-term} and from \eqref{eq:directsumdecomp2} it follows that the vanishing homology $H_*(\bV_\Delta, V_\e)$ is concentrated in dimensions $n, n+1, n+2$.

We pursue by showing that $H_n(\bV_\Delta, V_\e)=0$, i.e. that the last term of \eqref{eq:MV7} is zero.
We  need the relative version of the exact sequence of  Proposition \ref{t:varladder}, which appears to have an important overlap with our relative Mayer-Vietoris sequence. 

\begin{proposition} \label{p:varladderRel}
For any $q \in Q$, the sequence

\[ \begin{array}{l}   
0 \rightarrow H_{n+1}(\cA_q,\partial_2 \cA_q) \rightarrow \mathop{\bigoplus} _{s \in S_q} H_{n+1}(\cZ_s,\cC_s)
\rightarrow H_{n+1} (\cX_q,\cA_q) \rightarrow  \ \ \ \ \ \\
 \ \ \ \ \  \rightarrow H_{n}(\cA_q,\partial_2 \cA_q) \rightarrow \  \mathop{\bigoplus}_{s \in S_q} H_{n}(\cZ_s,\cC_s) \ 
\rightarrow  \ H_{n} (\cX_q,\cA_q)  \ 
\rightarrow  \ 0
   \end{array} \]

is exact for $n\ge 2$. Moreover we have: 
\[ H_{n+1}(\cA_q,\partial_2 \cA_q) \cong H_{n-1} (\cA_q)^{\mbox{\tiny free}} \; \mbox{and} \;
H_{n}(\cA_q,\partial_2 \cA_q) \cong H_{n} (\cA_q)   \oplus  H_{n-1} (\cA_q)^{\mbox{\tiny torsion}} \]
\end{proposition}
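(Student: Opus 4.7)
The plan is to obtain the stated 6-term sequence as a relative version of Siersma's absolute sequence (Proposition~\ref{t:varladder}) applied to the Milnor fibre $F := \cA_q$, by performing two systematic substitutions of terms. The ``Moreover'' assertion is then nothing more than the second part of Proposition~\ref{t:varladder} transcribed verbatim for $F = \cA_q$, so I will concentrate the effort on deriving the 6-term sequence itself.

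The first substitution exploits the contractibility of the Milnor ball $\cX_q$: the long exact sequence of the pair $(\cX_q, \cA_q)$ degenerates to natural isomorphisms $H_k(\cX_q, \cA_q) \cong \tilde{H}_{k-1}(\cA_q)$, so that the absolute groups $H_n(\cA_q)$ and $H_{n-1}(\cA_q)$ appearing in Siersma's sequence are replaced by $H_{n+1}(\cX_q, \cA_q)$ and $H_n(\cX_q, \cA_q)$ respectively. The second substitution rewrites the boundary terms $H_n(\cC_q) = \oplus_{s\in S_q} H_n(\cC_s)$ and $H_{n-1}(\cC_q) = \oplus_{s\in S_q} H_{n-1}(\cC_s)$ in terms of the relative groups of Lemma~\ref{l:concentration}: the absolute Wang sequence~\eqref{eq:wang} for the mapping torus $\cC_s \to \mathrm{link}_q(\Sigma_s)$ with fibre $\tF_s$ identifies $H_n(\cC_s)$ with $\ker(A_s - I)$ and $H_{n-1}(\cC_s)$ with $\coker(A_s - I)$, and Lemma~\ref{l:concentration} (a consequence of the relative Wang sequence, Lemma~\ref{l:localrelative}) in turn identifies these kernels and cokernels with $H_{n+1}(\cZ_s, \cC_s)$ and $H_n(\cZ_s, \cC_s)$. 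Inserting both sets of identifications into the sequence of Proposition~\ref{t:varladder} yields exactly the claimed 6-term relative sequence.

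The main technical obstacle I expect lies in the borderline case $n = 2$: while the relative Wang sequence of Lemma~\ref{l:localrelative} is clean for all $n \geq 2$, the absolute Wang~\eqref{eq:wang} for $\cC_s$ picks up an extra $\bZ$ summand in $H_1(\cC_s)$ coming from $H_0(\tF_s)$ (the ``base loop'' of the mapping torus), which has no counterpart in $H_n(\cZ_s, \cC_s) = \coker(A_s - I)$. Handling this requires the adapted interpretation of Siersma's sequence for $n = 2$ alluded to just after~\eqref{eq:wang}---concretely, one must verify that the meridional class around each branch of $\Sigma$ at $q$ maps trivially to $H_{n-1}(\cA_q)$, so that this extra summand drops out of the resulting 6-term sequence and the substitution remains compatible. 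Once this verification is in hand, the sequence is exact for all $n\ge 2$, as announced.
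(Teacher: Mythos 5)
Your proposal is correct and follows essentially the same route as the paper: trade $\tilde H_{k-1}(\cA_q)$ for $H_k(\cX_q,\cA_q)$ via contractibility of $\cX_q$, identify $\oplus_s H_k(\cC_s)$ with $\oplus_s H_{k+1}(\cZ_s,\cC_s)$, substitute into Proposition~\ref{t:varladder}, and flag that $n=2$ needs a separate check. The only (harmless) deviation is that you obtain $H_k(\cC_s)\cong H_{k+1}(\cZ_s,\cC_s)$ by composing the absolute Wang identification with Lemma~\ref{l:concentration}, whereas the paper reads it off directly from the long exact sequence of $(\cZ_s,\cC_s)$ using that $\cZ_s$ retracts to a circle; and note that the extra $\bZ$ in $H_1(\cC_s)$ for $n=2$ is the class of the base (link) circle running \emph{along} $\Sigma_s$, not a meridian around it.
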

\begin{proof}
Note that we have the following coincidence of objects which have different notations 
 in the projective setting of this section and in the local setting of \S \ref{ss:localtheory}:\\
$\cA_q := F$, $\partial_2 \cA_q :=\partial_2 F$.

We also have the isomorphisms $H_{*+1} (\cX_q,\cA_q) = \tilde{H}_{*}(\cA_q)$ since $\cX_q$ is contractible, 
then  $H_{*}(\partial_2 \cA_q) = \mathop{\bigoplus}_{s\in S_q} H_{*}(\cC_s)$ by definition, and
$H_{k}(\cC_s) = H_{k+1}(\cZ_s,\cC_s)$ for $k>2$, since $\cZ_s$ contracts to a circle.
We use Proposition \ref{t:varladder} and  check that, like in Lemma \ref{l:localrelative} on another (but similar) relative situation, the case
$n=2$ does not give any problem for the exactness of the above sequence.
\end{proof}

\subsection{Surjectivity of $j$} \label{ss:surj}

We focus on the following map which occurs in the 7-term exact sequence \eqref{eq:MV7}:
\begin{equation} \label{eq:j}
  j = j_1 \oplus j_2: H_{n}(\cZ,\cC) \to H_{n}(\cX,\cA) \oplus H_{n}(\cY,\cB).
\end{equation}

 %%%%%%%%%%%%%%%%%%%%%%%%%%%%%%%%%%%%%%%%%%%%%%%

\subsubsection{The first component $j_1: H_{n}(\cZ,\cC) \to H_{n}(\cX,\cA)$} \label{sss:first} \ \\
Note that, as shown above, we have the following direct sum decompositions of the source and the target:
\[ \begin{array}{l}
  H_{n}(\cZ,\cC) = \oplus_{p \in P}H_{n}(\cZ_p,\cC_p) \bigoplus\oplus_{q\in Q} \oplus_{s\in S_q}  H_{n}(\cZ_s,\cC_s) \oplus H_n(\cZ_y,\cC_y)                 , \\
  H_{n}(\cX,\cA) = \oplus_{q\in Q} H_n(\cX_q,\cA_q)  \bigoplus H_n(\cX_y,\cA_y)  .
\end{array}
\]
The
 terms corresponding to the points $p\in P$  are mapped by $j_1$ to zero since $H_{n}(\cX_p,\cA_p)=0$ by Lemma \ref{l:B_i}. 
 Next, as shown in Proposition \ref{p:varladderRel}, at the special points $q\in Q$ we have surjections:
$\mathop{\bigoplus}_{s \in S_q} H_{n}(\cZ_s,\cC_s) \rightarrow H_{n} (\cX_q,\cA_q)$ and moreover
$H_n(\cZ_y,\cC_y) \rightarrow H_n(\cX_y,\cA_y)$ is an isomorphism.
This shows that the morphism $j_1$ is surjective.
%%%%%%%

\subsubsection{The second component $j_2: H_{n}(\cZ,\cC) \to  H_{n}(\cY,\cB)$}  \ \\
Both sides are described with a relative CW-complex as explained in \S \ref{ss:cw2}. At the level of $n$-cells there are $\tmu$  $n$-cell generators for each  $p \in P$, and the same for each $s \in S_q$ and any $q\in Q$. Each of these generators is mapped bijectively to the single cluster of $n$-cell generators attached to the reference fibre $\tF$ (which is the fibre above the common point of the loops, see also Figure 1).
 We have the same boundary map for each axis point $p\in P$ in the source and in the target of $j_2$ and therefore, at the level of the $n$-homology, the restriction ${j_2}_| :  H_{n}(\cZ_p,\cC_p) 
	\to  H_{n}(\cY,\cB)$ is surjective. 
	Since we have at least one axis point on $\Sigma$ and $ \oplus_{p\in P}H_n(\cZ_p,C_p) \subset \ker j_1$, this shows that  the restriction ${j_2}_| : \oplus_{p \in P} H_{n}(\cZ_p,\cC_p) \to  H_{n}(\cY,\cB)$ is surjective too.
We have thus proven the surjectivity of $j$ and
in particular the following statement:
\begin{proposition}\label{p:6terms}
$H_n(\bV_\Delta, V_\e) = 0$ and in  particular the relative Mayer-Vietoris sequence \eqref{eq:MV7} reduces to the 6-terms sequence:

\begin{equation}\label{eq:MV6}\begin{array}{l}
 0 \to H_{n+2}(\cX \cup \cY , \cA \cup \cB) \to H_{n+1}(\cZ,\cC) \to H_{n+1}(\cX,\cA) \oplus H_{n+1}(\cY,\cB) \\
\  \ \to  \  H_{n+1}(\cX \cup \cY , \cA \cup \cB) \ \to \   H_{n}(\cZ,\cC) \ \stackrel{j}{\to} H_{n}(\cX,\cA) \  \oplus \  H_{n}(\cY,\cB) \ \ \ \to   0
 \end{array}
\end{equation}

 \end{proposition}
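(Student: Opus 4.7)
The plan is to leverage the 7-term Mayer--Vietoris sequence \eqref{eq:MV7} of Proposition \ref{p:7-term} together with the direct sum decomposition \eqref{eq:directsumdecomp2}. Since each isolated-singularity summand $H_*(B_r,V_\e\cap B_r)$ is concentrated in dimension $n+1$, the desired vanishing $H_n(\bV_\Delta,V_\e)=0$ is equivalent to $H_n(\cX\cup\cY,\cA\cup\cB)=0$. By exactness of \eqref{eq:MV7} at the final nonzero term, this reduces to proving that the map $j$ in \eqref{eq:j} is surjective, after which the 7-term sequence collapses to the claimed 6-term sequence \eqref{eq:MV6}.

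To verify surjectivity I would split $j=j_1\oplus j_2$ and decompose the source and targets along the direct summand structure indexed by punctures. For the first component $j_1:H_n(\cZ,\cC)\to H_n(\cX,\cA)$: Lemma \ref{l:B_i} forces the summands indexed by axis points $p\in P$ to map to zero, since $H_n(\cX_p,\cA_p)=0$; at each special point $q\in Q$, the relative Wang-type sequence of Proposition \ref{p:varladderRel} delivers the surjection $\bigoplus_{s\in S_q} H_n(\cZ_s,\cC_s)\to H_n(\cX_q,\cA_q)$; and the auxiliary puncture $y$ contributes an isomorphism onto its factor by construction. Taken together this proves surjectivity of $j_1$.

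For the second component $j_2:H_n(\cZ,\cC)\to H_n(\cY,\cB)$ I would use the explicit relative CW description of \S\ref{ss:cw} and \S\ref{ss:cw2}. In that model, the $n$-skeleton of $(\cY,\cB)$ consists of the single cluster of $\tmu$ thimbles attached to the reference transversal fibre $\tF$ over the base point $z$, which kills $H_{n-1}(\tF)$. For any axis point $p\in P$ the corresponding $n$-cells in $(\cZ_p,\cC_p)$ map bijectively onto this same cluster, and because the local system monodromy $A_p$ is the identity, the boundary map attaching $(n+1)$-cells over the loop at $p$ agrees on both sides. Hence the restriction $H_n(\cZ_p,\cC_p)\to H_n(\cY,\cB)$ is already surjective. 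Since $\bigoplus_{p\in P}H_n(\cZ_p,\cC_p)\subseteq\ker j_1$ and $\nu=\#P>0$ (at least $d$ axis points exist), the two components together surject onto $H_n(\cX,\cA)\oplus H_n(\cY,\cB)$, which closes the argument.

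The main obstacle will be the careful bookkeeping in the CW-model of $(\cY,\cB)$: the loops meet at the single point $z$ while the local pairs $(\cZ_s,\cC_s)$ are disjoint, so one must be precise about how the shared $n$-skeleton is identified with the $n$-skeletons of the disjoint local pieces in order to conclude that the axis-point contribution genuinely exhausts $H_n(\cY,\cB)$. The rest is formal diagram-chasing in the Mayer--Vietoris sequence, and once surjectivity of $j$ is established the reduction to the 6-term sequence \eqref{eq:MV6} follows immediately from exactness.
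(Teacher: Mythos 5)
Your proposal is correct and follows essentially the same route as the paper: you reduce via the direct sum decomposition and the 7-term Mayer--Vietoris sequence to the surjectivity of $j = j_1 \oplus j_2$, establish surjectivity of $j_1$ via Lemma \ref{l:B_i} and Proposition \ref{p:varladderRel}, and then use that the axis-point summands lie in $\ker j_1$ while $j_2$ restricted to a single such summand is already surjective (the CW-model identification and $A_p = I$). The bookkeeping you flag as the main obstacle is precisely the content of \S\ref{ss:cw2}, and your argument matches \S\ref{ss:surj} step for step.
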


This shows that  the relative homology
$H_*(\bV_\Delta, V_\e)$ is concentrated at the levels $n+1$ and $n+2$, and thus finishes the proof of Theorem \ref{t:main} in case of irreducible $\Sigma$.

%%%%%%%%%%%%%%%%%%%%%%%%%%

\subsection{Reducible $\Sigma$} \label{ss:reducible} 
Let  $\Sigma = \Sigma_1 \cup \ldots \cup \Sigma_{\rho}$ be the decomposition into irreducible components.
 The proof of Theorem \ref{t:main} in the reducible case remains the same modulo the following small changes and additional notations:
\begin{enumerate}
 \item For each $i$ one considers the set $Q_i$ of special singular points of $\Sigma_i$. The points of intersection $\Sigma_{i_1} \cap \Sigma_{i_2}$ for $i_1\not= i_2$ are considered as special points of both sets $Q_i$ and $Q_j$, and therefore the union $Q := \cup_i Q_i$ is not disjoint. For some $q\in \Sigma_{i_1} \cap \Sigma_{i_2}$, the set of indices $S_q$ runs over all the local irreducible components of the curve germ $(\Sigma, q)$.
Nevertheless, when we are counting the local irreducible branches at some point $q\in Q_i$ on a specified component $\Sigma_i$ then the set $S_q$ will tacitly mean only those local branches of $\Sigma_i$ at $q$.

\item The pair $(\cY,\cB)$ is a disjoint union and its homology decomposes accordingly, namely $H_{*}(\cY,\cB) = \bigoplus_{1\le i \le \rho}  H_{*}(\cY_i,\cB_i)$.
\item  For each component $\Sigma_i$ one has  its transversal Milnor fibre denoted by $\tF_i$ and its transversal Milnor number $\tmu_i$. \end{enumerate}

%%%%%%%%%%%

\section{Betti numbers of hypersurfaces with 1-dimensional singular locus}\label{s:Betti}

By Theorem \ref{t:main}, the vanishing homology of  a hypersurface $V\subset \bP^{n+1}$ with 1-dimensional singularities is concentrated in dimensions $n+1$ and $n+2$.
We show that its $(n+2)$th vanishing homology group depends on the local data of the special points $Q$ and on the genus loop monodromies along the singular branches. We study this dependence in more detail, we determine the rank of the free group $H_{n+2}(\bV_\Delta, V_\e)$, and discover mild conditions which ensure the vanishing of this group.

We continue to use  the notations of \S\ref{ss:1-dim}. Let us especially recall the notations from \S \ref{ss:cw2} adapted here to the general setting of a reducible singular locus $\Sigma = \cup_{i=1}^{\rho}\Sigma_i$.
For any $1\le i\le \rho$,  $\Sigma_i^* = \Sigma_i \m (P_i\sqcup Q_i\sqcup \{y_i\} )$ retracts to a bouquet $W_i$ of $2g_i + \nu_i + \gamma_i$ circles, where $g_i$ denotes the genus of the normalization $\tilde \Sigma_i$, where $\nu_i := \# P_i$ is the number of axis points $A\cap \Sigma_i$,   where $\gamma_i := \sum_{q\in Q_i}\# S_q$ and $Q_i$ denotes the set of special points of $\Sigma_i$,   the set $S_q$ is indexing the local branches of $\Sigma_i$ at $q$, and where $y_i\in \Sigma_i$ is some point not in the set $P_i \cup Q_i$.
We denote by $G_i$  the set of genus loops of $W_i$.

By Proposition \ref{p:7-term} we have  $H_{n+2}(\bV_\Delta, V_\e) = \ker j = \ker [ j_1 \oplus j_2 ]$,
where 
$$j_1 \oplus j_2: H_{n+1}(\cZ,\cC) \to H_{n+1}(\cX,\cA) \oplus H_{n+1}(\cY,\cB).$$

The main idea in this section is to embed   $H_{n+2}(\bV_\Delta, V_\e)$ into the module
 $\bD = \oplus_{i=1}^\rho \bD_i$, where $\bD_i$ is the image of the diagonal map: 
\[ \Delta_*^i: H_n(E_i^{\pitchfork},F_i^{\pitchfork}) \to \oplus_{q \in Q_i} \oplus_{s \in S_q}  H_n(E_i^{\pitchfork},F_i^{\pitchfork}), \ \ \ a \mapsto (a, a, \ldots , a).\]

%From that inclusion, it allready follows that 
%$ b_{n+2}(\bV_\Delta, V_\e) \le \sum_{i=1}^\rho \mu_i^{\pitchfork}.$
%\[ \bD :=
%\oplus_{i=1}^\rho H_n(E_i^{\pitchfork},F_i^{\pitchfork}) \cong \oplus_{i=1}^\rho \bZ^{\mu_i^{\pitchfork}}\]

The space $\bD$ will serve as a reference space and is isomorphic to 
$\oplus_{i=1}^\rho H_n(F_i^{\pitchfork}) =  \oplus_{i=1}^\rho \bZ^{\mu_i^\pitchfork}.$

The source and the target of $j_1 \oplus j_2$ have a direct sum decomposition at level $n+1$, like has been discussed at \S\ref{ss:surj} for the $n$-th homology groups\footnote{we remind from \S \ref{ss:reducible} that the notation $S_q$ depends of whether the point $q$ is considered in $Q$ or in $Q_i$, namely it takes either the local branches of $\Sigma$ at $q$, or the local branches of $\Sigma_i$ at $q$, accordingly.}:
\begin{equation}\label{eq:directsum}
   j_1 \oplus j_2: \oplus_{p\in P} H_{n+1}(\cZ_p,\cC_p) \oplus_{q\in Q} \oplus_{s\in S_q}  H_{n+1}(\cZ_s,\cC_s) \oplus_{i=1}^\rho H_{n+1}(\cZ_{y_i},\cC_{y_i})
	\to 
\end{equation} 
\[ \oplus_{q\in Q} H_{n+1}(\cX_q,\cA_q) \bigoplus H_{n+1}(\cY,\cB).\]

By Lemma \ref{l:concentration} we have  $H_{n+1}(\cZ_v,\cC_v) = \ker (A_v - I)$, where:
\[ A_v - I : H_n(E_i^{\pitchfork},F_i^{\pitchfork}) \to H_n(E_i^{\pitchfork},F_i^{\pitchfork})\]
is the vertical monodromy at some point
 $v\in P_i$, or $v\in S_q$ and $q\in Q_i$, or $v=y_i$.
The left hand side of \eqref{eq:directsum} consists therefore of local contributions of the form  $\ker (A_v - I)\subset H_{n} (\tE_i,\tF_i)\simeq H_{n-1}(F_i^{\pitchfork}) \simeq \bZ^{\mu_i^{\pitchfork}}$.

We have studied $j_1$ in  \S \ref{sss:first} at the level $n$. 
 For the $(n+1)$th homology groups, the restriction of $j_1$ to the first summand in \eqref{eq:directsum}  is zero since its image is in $\oplus_{p\in P} H_{n+1}(\cX_p,\cA_p)$ which is zero by Lemma \ref{l:B_i}. The image by $j_1$ of $\oplus_{i=1}^\rho H_{n+1}(\cZ_{y_i},\cC_{y_i})$ is also zero since $H_{n+1}(\cX_{y_i},\cA_{y_i})=H_{n}(\cA_{y_i})=0$.
The restriction of $j_1$ to the remaining summand is the direct sum $\oplus_{q\in Q} j_{1,q}$   of the maps:
\[ j_{1,q}:  \mathop{\oplus}_{s \in S_ q} H_{n+1}(\cZ_s,\cC_s)
\rightarrow H_{n+1} (\cX_q,\cA_q).\]

 By Proposition \ref{t:varladder}, the kernel of  $j_{1,q}$ is equal to $H_{n+1}(\cA_q,\partial_2 \cA_q)$, where $\cA_q$ is the local Milnor fibre of the hypersurface germ $(V,q)$, $q\in Q$, and can be identified  to the free part of $H_{n-1}(\cA_q)$. 
Since   $H_{n+1}(\cA_q,\partial_2 \cA_q)$ is contained in $\oplus_{i=1}^\rho \oplus_{Q_i \ni q} \mathop{\oplus}_{s \in S_q} H_{n+1}(\cZ_s,\cC_s)$, it turns out that the intersection $(\oplus_{i=1}^\rho\bD_i) \cap 
\oplus_{q \in Q} H_{n+1}(\cA_q,\partial_2 \cA_q)$ is well defined. 

\smallskip
After these preparation we can state:

\begin{theorem}\label{t:local_fibres} 
 In the above  notations we have:

\[  H^{\curlyvee}_{n+2}(V) = \left( \bD \cap 
\oplus_{q \in Q} H_{n+1}(\cA_q,\partial_2 \cA_q)
  \right) \cap \oplus_{i=1}^\rho \Delta_*^i(\bigcap_{j\in G_i} \ker(A_j - I))
\]
where $A_j : H_n(E_i^{\pitchfork},F_i^{\pitchfork}) \to H_n(E_i^{\pitchfork},F_i^{\pitchfork})$ denotes the monodromy along the loop of $W_i$ indexed by $j\in G_i$.
 
In particular $H^{\curlyvee}_{n+2}(V)$ is free and its rank is bounded as follows\footnote{note that no multiplicities but only transversal types are involved in the rank formula.}:
\[ \rank H^{\curlyvee}_{n+2}(V) \le \sum_{i=1}^\rho 
\min_{s\in S_q, q\in Q_i,j\in G_i} \left\{ \dim \ker(A_s - I), \dim \ker(A_j - I) \right\}
\le \sum_{i=1}^\rho \mu_i^{\pitchfork}.
\]
\end{theorem}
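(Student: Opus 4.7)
The plan is to start from Proposition \ref{p:7-term}, which writes $H^{\curlyvee}_{n+2}(V)=\ker(j_1\oplus j_2)$, and then to compute this kernel using the direct-sum decomposition \eqref{eq:directsum}. The proof divides into three stages: identify $\ker j_1$, identify $\ker j_2$, and intersect the two to read off both the algebraic description and the rank bound.

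For $\ker j_1$: the target has only summands indexed by $q\in Q$, so $j_1$ vanishes automatically on the $P_i$- and $y_i$-parts of the source---at axis points by Lemma \ref{l:B_i}, at $y_i$ because $\cA_{y_i}\simeq F_i^{\pitchfork}$ has trivial $n$-th reduced homology. On the $Q$-part, $j_1$ splits as $\bigoplus_{q\in Q}j_{1,q}$, and Proposition \ref{p:varladderRel} identifies each $\ker j_{1,q}$ with $H_{n+1}(\cA_q,\partial_2\cA_q)$.

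For $\ker j_2$: I would use the CW-model of $(\cY_i,\cB_i)$ from \S\ref{ss:cw2}, realising $H_{n+1}(\cY_i,\cB_i)$ as the kernel of the boundary map $(a_v)_{v\in W_i}\mapsto\sum_v(A_v-I)(a_v)$ from $\bigoplus_{v\in W_i}H_n(\tE_i,\tF_i)$ to $H_n(\tE_i,\tF_i)$. The map $j_2$ sends $\ker(A_v-I)$ to the $v$-summand for $v\in W_i$, and sends the exceptional $y_i$-summand (which has no corresponding loop in $W_i$, because $y_i$ was introduced solely to retract $\Sigma_i^*$ onto the bouquet) to a specific chain supported at all $W_i$-positions, obtained by lifting the surface relation $\sigma_{y_i}\prod_{v\in T_i'}\sigma_v\prod_k[\alpha_k,\beta_k]=1$ to the fibre-bundle level. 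Setting the total chain to zero entry by entry produces two kinds of conditions: the genus-loop entries force $a_{y_i}\in\bigcap_{j\in G_i}\ker(A_j-I)$, while the $P_i$- and branch-loop entries force each $a_s$ and each $a_p$ to coincide with a common value $a\in H_n(\tE_i,\tF_i)$ (arising from the $y_i$-contribution), constrained a priori by the source description to lie in $\bigcap_{s\in S_q,q\in Q_i}\ker(A_s-I)$. Restricting to the $Q$-coordinates identifies the projection of $\ker j_2$ with the diagonal image $\Delta_*^i(a)$ inside $\bD_i$.

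Intersecting with $\ker j_1$ then adds the condition that $(a,\ldots,a)_{s\in S_q}\in H_{n+1}(\cA_q,\partial_2\cA_q)$ for every $q\in Q_i$, which yields the stated formula. Freeness is inherited from the free ambient module $\bD$, and the rank estimate follows from the two separate containments of $H^{\curlyvee}_{n+2}(V)$ in $\bigoplus_i\Delta_*^i(\bigcap_s\ker(A_s-I))$ and in $\bigoplus_i\Delta_*^i(\bigcap_{j\in G_i}\ker(A_j-I))$, each of which sits inside $\bigoplus_i H_n(\tE_i,\tF_i)\cong\bigoplus_i\bZ^{\tmu_i^{\pitchfork}}$. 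The hardest step is the precise chain-level identification of $j_2(a_{y_i})$: showing that the genus-loop components are controlled \emph{exactly} by the operators $A_j-I$ for $j\in G_i$ requires a careful fibre-bundle transport along a representative of the $\pi_1$-relation, respecting the noncommutativity of the local monodromies.
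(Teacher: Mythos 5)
Your approach tracks the paper's proof closely: start from Proposition \ref{p:7-term} to write $H^{\curlyvee}_{n+2}(V)=\ker(j_1\oplus j_2)$, use the direct-sum splitting \eqref{eq:directsum}, identify $\ker j_1$ at special points via Proposition \ref{p:varladderRel}, build the CW-model of $(\cY_i,\cB_i)$ from \S\ref{ss:cw2}, and read off $\ker j_{2,i}$ from the single relation produced by the $y_i$-loop, which is homotopic in $\Sigma_i^*$ to a word in the $W_i$-loops. The identification of $\ker j_1$, the diagonal form of the $P_i$- and $S_q$-coordinates, the freeness argument, and the rank estimate are all as in the paper.

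The one place you explicitly defer is precisely where the paper actually does work, namely showing that the genus-loop entries of $j_2(a_{y_i})$ vanish exactly when $a_{y_i}\in\bigcap_{j\in G_i}\ker(A_j-I)$. This is not automatic, because the $y_i$-relation involves the commutators $[\gamma_1,\gamma_2]$, and transporting $a=a_{y_i}$ along $\gamma_1\gamma_2\gamma_1^{-1}\gamma_2^{-1}$ produces relative chains over each leg carrying $a$, $B_1(a)$, $B_2B_1(a)$, $B_1^{-1}B_2B_1(a)$. Requiring the $\gamma_1$- and $\gamma_2$-chains to cancel gives a priori the system
$B_1^{-1}B_2B_1(a)=a$, $B_2B_1(a)=B_1(a)$, $B_2^{-1}B_1^{-1}B_2B_1(a)=B_1(a)$, $B_1^{-1}B_2B_1(a)=B_2B_1(a)$,
none of which individually reads $B_j(a)=a$. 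The paper's argument is to observe that these equalities are cyclically linked, forcing all eight terms to coincide, and only then does one extract $B_1(a)=a$ and $B_2(a)=a$. Your proposal names this as ``the hardest step'' and describes what is needed, but leaves it unexecuted; as written, the presentation of $\ker j_2$ as a diagonal intersected with $\bigcap_{j\in G_i}\ker(A_j-I)$ is asserted rather than proved. Filling in the commutator cancellation as above closes the gap and brings your proof to coincide with the paper's.
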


\begin{proof}
In order to handle the map $j_2$, we recall
the relative CW-complex structure of $(\cY,\cB)$ given in \S \ref{ss:cw2}. 
On each component $W_i$ we have identified the set of points $T_i$ which consists of the axis points $P_i$, the special points $Q_i$, and one general point $y_i$. 
The punctured $\Sigma_i^*$ retracts to a configuration $W_i$ of $2g_i + \nu_i + \gamma_i$ loops indexed by the set $W_i$, based at some point $z_i$, where $2g_i$ of them are ``genus loops''  and the other loops are projections by the normalization map $n_i : \tilde \Sigma_i \to \Sigma_i$ of loops around all the punctures of $\tilde \Sigma_i \m n_i^{-1}(P_i\sqcup Q_i)$. Notice that $\# T_i -1 \ge \nu_i >0$.

Let $W := \sqcup_i W_i$.
Consider the spaces $\cY_{W} := \pi_\Sigma^{-1}(W)$ and $\cB_{W} := \cB \cap \cY_{W}$. We have the homotopy equivalence of pairs $(\cY, \cB) \simeq (\cY_{W}, \cB_{W})$ which has been discussed at \S \ref{ss:cw2} 
and use the CW-complex model for $(\cY_W,\cB_W)$. We also have the decomposition $(\cY,\cB) = \sqcup_{i=1}^\rho (\cY_i,\cB_i)$ according to the components $W_i$

In our representation, the map $j_2$ splits into the direct sum of the following maps, for $i\in \{ 1, \ldots , \rho\}$:
\[
j_{2, i} :  \oplus_{p \in P_i}H_{n+1}(\cZ_p,\cC_p) \oplus_{q\in Q_i} \oplus_{s\in S_q}  H_{n+1}(\cZ_s,\cC_s) \oplus H_{n+1}(\cZ_{y_i},\cC_{y_i})   \to  H_{n+1}(\cY_i,\cB_i).
\]
By Lemma \ref{p:concentration}, the map $j_{2, i}$ restricts to an embedding of the direct sum $\oplus_{p \in P_i}H_{n+1}(\cZ_p,\cC_p) \oplus_{q\in Q_i} \oplus_{s\in S_q}  H_{n+1}(\cZ_s,\cC_s)$ into $H_{n+1}(\cY_i,\cB_i)$.
Note that $H_{n+1}(\cZ_v,\cC_v) = \ker (A_v - I) \subset H_{n} (\tE_i,\tF_i)\simeq H_{n-1}(F_i^{\pitchfork})$ for any point $v\in P_i$ or $v\in S_q$ and $q\in Q_i$.
 The kernel $\ker j_{2, i}$ is therefore determined by the relations induced by the image of the remaining direct summand $H_{n+1}(\cZ_{y_i},\cC_{y_i})$ into $H_{n+1}(\cY_i,\cB_i)$.
% \subset \oplus_{j\in W}H_{n}(\tE_{i}, \tF_i)$. 

More precisely, each $(n+1)$-cycle generator $w$ of $H_{n+1}(\cZ_{y_i},\cC_{y_i})\simeq H_{n} (\tE_i,\tF_i)\simeq H_{n-1}(F_i^{\pitchfork})$ induces one single relation. Namely $j_2(w)$ is  a $(n+1)$-cycle above the loop around the point $y_i$, and since this loop is homotopy equivalent to a certain composition of other loops of $W_i$, it follows that $j_2(w)$ is precisely homologous to the corresponding sum of cycles above the loops in $W_i$. Our scope is to find all such sums which contain as terms only elements from the images $j_2(H_{n+1}(\cZ_p,\cC_p))$ for $p\in P_i$ and $j_2(H_{n+1}(\cZ_s,\cC_s))$ for $s\in S_q$ and $q\in Q_i$. We have the following facts: 

\noindent 
1). By Lemma \ref{l:concentration} and \S \ref{ss:cw},  such images are in the kernels of $A-I$ where $A$ is the vertical monodromy of the loop corresponding to $p\in P_i$ or to $s\in S_q$ and $q\in Q_i$.  Therefore  the expression of $j_2(w)$ contains the sum of  
those generators of $j_2(H_{n+1}(\cZ_p,\cC_p))$ and of $j_2(H_{n+1}(\cZ_s,\cC_s))$ which correspond to the same representative $w \in H_{n-1}(F_i^{\pitchfork})$, for any $p\in P_i$ and any $s\in S_q$ and $q\in Q_i$. This implies that 
$w\in \cap_{s\in S_q, q\in Q_i} \ker(A_s - I)$. Note that the points $p\in P_i$ are superfluous in this intersection since  $A_p =I$ for all such points.

\noindent 
2). Let us consider a pair $\gamma_1$ and $\gamma_2$ of genus loops (whenever $g_i >0$) and let us denote by $B_1$ and $B_2$ the local system monodromy along these loops. The relation produced by $j_2(w)$ contains in principle the following relative cycle along the wedge $\gamma_1 \vee \gamma_2$: it starts from
    the representative $a_w \in H_{n-1}(F_i^{\pitchfork})$ of $w$,  moves in the local system along $\gamma_1$ arriving as $B_1(a_w)$ after one loop at the fibre over the base point $z$, next moved along $\gamma_2$ to $B_2B_1(a_w)$, then in the opposite direction along $\gamma_1$ to $B_1^{-1}B_2B_1(a_w)$ and finally in the opposite direction along $\gamma_2$ to $B_2^{-1}B_1^{-1}B_2B_1(a_w)$.
Our condition tells that the relation produced by $j_2(w)$ does not involve $(n+1)$-cycles along the genus loops since $\im j_2\cap \oplus_{j\in G_i}H_{n+1}(\tE_j, \tF) = 0$, by Lemma \ref{p:concentration} and \eqref{eq:commut}.  Therefore the relative cycles along $\gamma_1$ and along $\gamma_2$ must cancel, which fact amounts to the following two pairs of equalities:
\[ \begin{array}{lcl}
    B_1^{-1}B_2B_1(a_w) = a_w     & \mbox{  and  }  &     B_2B_1(a_w) = B_1(a_w), \\
  B_2^{-1}B_1^{-1}B_2B_1(a_w) = B_1(a_w)  & \mbox{  and  }  &   B_1^{-1}B_2B_1(a_w) = B_2B_1(a_w).
   \end{array}
\]
These equalities are  cyclic, thus the 8 above terms appear to be equal. In particular we get $B_1(a_w) = a_w$ and $B_2(a_w) = (a_w)$ for any $w\in \cap_{s\in S_q, q\in Q_i} \ker(A_s - I)$.
 We conclude to the same equalities for any pair of genus loops. 

 Altogether we obtain the following diagonal presentation of $\ker j_{2, i}$:
\[ \begin{array}{r}
 \ker j_{2, i} = \left\{ (a_w, a_w , \ldots , a_w)\in  \oplus_{q\in Q_i} \oplus_{s\in S_q}  H_{n+1}(\cZ_s,\cC_s) \oplus H_{n+1}(\cZ_{y_i},\cC_{y_i}) \mid \ \ \right.  \\  \ \ \ \ \ \ \left.   w \in \bigcap_{s\in S_q, q\in Q_i} \ker(A_s - I) \cap \bigcap_{j\in G_i} \ker(A_j - I) \right\} \subset \bD_i. 
   \end{array}
\]

Since $H_{n+2}(\bV_\Delta, V_\e) \subset \ker j_2 = \oplus_{i=1}^\rho \Delta_*^i \left( 
\cap_{s\in S_q, q\in Q_i} \ker(A_s - I) \cap \cap_{j\in G_i} \ker(A_j - I) \right)$
we get in particular the claimed inequality for the Betti number $b_{n+2}(\bV_\Delta, V_\e)$.
The freeness of $H_{n+2}(\bV_\Delta, V_\e)$ follows from the fact that $\ker j_2$ is free (as the image of the intersection of free $\bZ$-submodules).	

We also obtain  the desired expression of $H_{n+2}(\bV_\Delta, V_\e) = \ker (j_1 \oplus j_2) = \ker j_1 \cap \ker j_2$ by intersecting $\ker j_2$ with the diagonal expression of $\ker j_1$ given just before the statement of Theorem \ref{t:local_fibres}. 
\end{proof}

\begin{remark}\label{r:irreducibleSingV}
\textbf{Irreducible $\Sigma$.}\\
In case $\Sigma$ is irreducible, the equality of Theorem \ref{t:local_fibres} reads:
\[  H^{\curlyvee}_{n+2}(V) \simeq
\bigcap_{q \in Q} H_{n+1}(\cA_q,\partial_2 \cA_q)
   \cap \bigcap_{j\in G} \ker(A_j - I).
\]
In particular, if there are no special points on $\Sigma$ and the monodromy along every the genus loop is the identity, then $H^{\curlyvee}_{n+2}(V)\simeq H_{n-1}(F^{\pitchfork})$. This situation can be seen in the example $V := \{ xy=0\} \subset \bP^3$ for which $H^{\curlyvee}_{4}(V) \simeq \bZ$ and $\rank H^{\curlyvee}_{3}(V) =1$.
\end{remark}

\begin{remark}\label{r:noaxispointscontribution}
\textbf{$(n+1)$th vanishing Betti number.}\\
It appears that $H^{\curlyvee}_{n+2}(V)$ does not depend neither on the axis points, nor on the isolated singular points of $V$. However 
$H^{\curlyvee}_{n+1}(V)$  depends on those elements since the Euler number does, after \cite[Theorem 5.3]{ST-bettimax}:
\begin{equation}\label{eq:euler}
\chi(\bV_\Delta, V_\e) = (-1)^{n+1} \sum_{i=1}^{\rho} (2g_i + \nu_i+ \gamma_i  -2)\mu_i^\pitchfork
 -
 \sum_{q\in Q} (\chi(\cA_q) - 1)+ (-1)^{n+1}\sum_{r\in R} \mu_r.
\end{equation}
\end{remark}

Theorem \ref{t:local_fibres} is useful when we have information about the transversal monodromies, namely about the eigenspaces corresponding to the eigenvalue 1. We immediately derive: 

\begin{corollary}\label{c:annul0}
 If, for every $i\in \{ 1, \ldots , \rho \}$, at least one of the transversal monodromies along the loops $W_i \subset \Sigma_i^*$ has no eigenvalue 1, then  $H^{\curlyvee}_{n+2}(V)= 0$. \fin
\end{corollary}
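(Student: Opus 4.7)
The plan is to derive the corollary directly from Theorem \ref{t:local_fibres} by showing that under the hypothesis the intersection of kernels featured on the right-hand side is trivial component-by-component. First I would unpack the two sources of constraints on an element $a \in H^{\curlyvee}_{n+2}(V)$ contributed by the $i$-th component $\Sigma_i$. By Theorem \ref{t:local_fibres}, such an $a$ lies in the image of the diagonal $\Delta_*^i$, so it is determined by an element $a_i \in H_n(\tE_i, \tF_i)$ subject to (i) $a_i \in \bigcap_{j\in G_i}\ker(A_j-I)$ coming from the genus-loop factor $\Delta_*^i(\bigcap_{j\in G_i}\ker(A_j-I))$, and (ii) for every $q\in Q_i$, the tuple $(a_i,\ldots,a_i)_{s\in S_q}$ must lie in $H_{n+1}(\cA_q,\partial_2\cA_q)$.

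Next I would observe that condition (ii) forces $a_i \in \bigcap_{s\in S_q,\, q\in Q_i}\ker(A_s-I)$. Indeed, by Proposition \ref{t:varladderRel} the group $H_{n+1}(\cA_q,\partial_2\cA_q)$ is a subgroup of $\oplus_{s\in S_q} H_{n+1}(\cZ_s,\cC_s)$, and by Lemma \ref{l:concentration} each factor $H_{n+1}(\cZ_s,\cC_s)$ identifies with $\ker(A_s-I)\subset H_n(\tE_i,\tF_i)$. Hence a diagonal tuple can live in this subgroup only if every component $a_i$ itself is killed by $A_s-I$ for every $s\in S_q$ and every $q\in Q_i$. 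Combining (i) and (ii), the element $a_i$ must belong to $\bigcap_v \ker(A_v-I)$, where $v$ ranges over all loops of $\Gamma_i$ whose transversal monodromy is non-trivial (axis points contribute identity monodromy and hence no constraint).

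Finally I would exploit the hypothesis. By assumption, for each $i$ there is at least one loop $v\in\Gamma_i$ whose monodromy $A_v$ has no eigenvalue $1$. Since $H_n(\tE_i,\tF_i)$ is free of rank $\tmu_i$ and $A_v-I$ acts on it as an integral matrix with non-zero determinant (no eigenvalue $1$ over $\bC$), the endomorphism $A_v-I$ is injective over $\bZ$. Therefore $\ker(A_v-I)=0$, which makes the whole intersection trivial, forcing $a_i=0$. Running this argument for every $i$ yields $H^{\curlyvee}_{n+2}(V)=0$.

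The only minor subtlety is the correct bookkeeping when $\Sigma$ is reducible, namely checking that a loop around a point $q\in\Sigma_{i_1}\cap\Sigma_{i_2}$ that is special for both components contributes its transversal monodromy separately to each $S_q$ relative to $\Sigma_{i_1}$ and to $\Sigma_{i_2}$; I do not foresee any genuine obstacle, since the statement of the corollary already quantifies over the loops of each $\Gamma_i$ independently.
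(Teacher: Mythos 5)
Your proof is correct and takes essentially the same route the paper intends, namely reading the conclusion off Theorem \ref{t:local_fibres}: the $i$-th diagonal component must lie in $\ker(A_v-I)$ for every non-axis loop $v$ of $\Gamma_i$, and if some $A_v$ has no eigenvalue $1$ then $\det(A_v-I)\neq 0$ so this kernel vanishes over $\bZ$. The paper leaves the corollary as an immediate consequence (one could equally just invoke the theorem's rank bound together with the freeness of $H^{\curlyvee}_{n+2}(V)$), and your version simply spells out the same deduction.
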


We may also apply  Theorem \ref{t:local_fibres} when we have enough information about local Milnor fibres of special points, like in the following case (see also Example \ref{ex:red}):

\begin{corollary}\label{c:annul}
 Assume that for any $i\in \{ 1, \ldots , \rho\}$ there is some special point $q_i\in Q$ such that
the $(n-1)^{th}$ homology group of the local Milnor fibre $\cA_{q_i}$ of the hypersurface germ $(V, q_i)$ has rank zero. Then:
 $$H^{\curlyvee}_{n+2}(V) = 0$$
 and the single non-zero vanishing Betti number
$b_{n+1}^{\curlyvee}(V)$ is given by the formula:
\begin{equation}\label{eq:Betti} 
  \rank H^{\curlyvee}_{n+1}(V) =\sum_i (\nu_i+ \gamma_i + 2g_i -2)\tmu_i +(-1)^n \sum_{q \in Q} (\chi(\cA_q)-1)+  \sum_{r \in R} \mu_r.
\end{equation}
\end{corollary}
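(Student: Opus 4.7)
The plan is to derive both assertions directly from Theorem \ref{t:local_fibres} combined with the Euler characteristic formula \eqref{eq:euler}, the crucial input being the vanishing of the groups $H_{n+1}(\cA_{q_i},\partial_2 \cA_{q_i})$ supplied by Proposition \ref{p:varladderRel}.

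For the vanishing of $H^{\curlyvee}_{n+2}(V)$, first observe that Proposition \ref{p:varladderRel} gives an isomorphism $H_{n+1}(\cA_{q_i},\partial_2 \cA_{q_i}) \cong H_{n-1}(\cA_{q_i})^{\mbox{\tiny free}}$, which is zero under the hypothesis. Now take an arbitrary element $\xi \in H^{\curlyvee}_{n+2}(V)$. By Theorem \ref{t:local_fibres}, $\xi$ sits inside $\bD \cap \oplus_{q\in Q} H_{n+1}(\cA_q,\partial_2 \cA_q)$. Splitting according to the decomposition $\bD = \oplus_{i=1}^\rho \bD_i$, write $\xi = \sum_{i=1}^\rho \xi_i$ with $\xi_i = \Delta_*^i(a_i)$ for some $a_i \in H_n(\tE_i,\tF_i)$. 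The membership of $\xi$ in $\oplus_{q\in Q}H_{n+1}(\cA_q,\partial_2 \cA_q)$ forces the block of $\xi_i$ indexed by $s\in S_{q_i}$ (restricted to $\Sigma_i$) to lie inside $H_{n+1}(\cA_{q_i},\partial_2 \cA_{q_i}) = 0$. Since every entry of that block equals $a_i$ by the diagonal description of $\bD_i$, we conclude $a_i = 0$, whence $\xi_i = 0$ for every $i$ and therefore $\xi = 0$.

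Once $H^{\curlyvee}_{n+2}(V) = 0$ is established, Theorem \ref{t:main} implies that the vanishing homology is concentrated in degree $n+1$, so that
\[ \chi(\bV_\Delta, V_\e) = (-1)^{n+1} \rank H^{\curlyvee}_{n+1}(V). \]
Substituting the explicit formula \eqref{eq:euler} and solving for the rank produces the claimed expression for $b^{\curlyvee}_{n+1}(V)$; the signs work out because $-\sum_{q}(\chi(\cA_q)-1)$ contributes $(-1)^n\sum_{q}(\chi(\cA_q)-1)$ after multiplication by $(-1)^{n+1}$.

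The one delicate spot is the bookkeeping at crossing points of distinct irreducible components of $\Sigma$: a single special point $q \in \Sigma_{i_1}\cap\Sigma_{i_2}$ belongs simultaneously to $Q_{i_1}$ and $Q_{i_2}$, with $S_q$ containing branches from both components. This causes no genuine obstruction, since the diagonal value $a_i$ populates each coordinate indexed by $s\in S_{q_i}|_{\Sigma_i}$ inside the summand $\oplus_{s\in S_{q_i}}H_{n+1}(\cZ_s,\cC_s)$, and the inclusion $H_{n+1}(\cA_{q_i},\partial_2 \cA_{q_i}) \hookrightarrow \oplus_{s\in S_{q_i}}H_{n+1}(\cZ_s,\cC_s)$ from Proposition \ref{p:varladderRel} therefore annihilates all these diagonal entries at once. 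So the main point to check carefully is this compatibility between the diagonal structure of $\bD_i$ and the relative-cycles subgroup $H_{n+1}(\cA_{q_i},\partial_2 \cA_{q_i})$; once it is in place, the argument is essentially a bookkeeping exercise built on Theorem \ref{t:local_fibres} and the Euler characteristic identity.
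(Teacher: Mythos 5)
Your proposal is correct and follows essentially the same line of reasoning as the paper's own proof: you invoke Theorem \ref{t:local_fibres} to place any class in $H^{\curlyvee}_{n+2}(V)$ inside the diagonal $\bD$ intersected with $\oplus_q H_{n+1}(\cA_q,\partial_2\cA_q)$, use the hypothesis together with Proposition \ref{p:varladderRel} to kill the block at each $q_i$, and conclude $a_i=0$ from the diagonal form; the Betti formula then falls out of \eqref{eq:euler} since only degree $n+1$ survives. Your explicit discussion of crossing points of distinct components, while not spelled out in the paper, is sound and merely makes the paper's terse "thus all $w_i$ are zero" more transparent.
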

\begin{proof}
Let $(w_1,\cdots,w_{\rho})$ be an element of the reference space $\oplus_{i=1}^\rho H_n(E_i^{\pitchfork},F_i^{\pitchfork}) \cong \oplus_{i=1}^\rho \bZ^{\mu_i^{\pitchfork}}$. By the diagonal map this corresponds to elements $w_i \in  H_{n+1}(\cZ_s,\cC_s)$ for $s \in S_q$ and $q \in \Sigma_i$. 
By the discussion introducing Theorem \ref{t:local_fibres} the kernel of some component $j_{1,q}:  \mathop{\oplus}_{s \in S_ q} H_{n+1}(\cZ_s,\cC_s)
\rightarrow H_{n+1} (\cX_q,\cA_q)$ is equal to $H_{n+1}(\cA_q,\partial_2 \cA_q)$ which in turn is identified to the free part of $H_{n-1}(\cA_q)=0$. 
The rank zero condition implies that $w_i=0$ for $i$ such that $q \in \Sigma_i$, thus all $w_i$ are zero.

As for the rank of $H_{n+1}(\bV_\Delta, V_\e)$, the formula follows from the Euler characteristic computation \eqref{eq:euler}.
\end{proof}

\begin{remark}
In case of an irreducible singular set $\Sigma$, Corollary \ref{c:annul} tells that one singular point $q\in Q$ with a $(n-1)$th Betti number of the Milnor fibre equal to zero is sufficient for the vanishing of  $H^{\curlyvee}_{n+2}(V)$.
\end{remark}

%%%%%%%%%%%%%%%%%%%%%%%%%%%%
%%%%%%%%%%%%%%%%%%%%%%%%
\section{Computations of Betti numbers}\label{s:remarks}

\subsection{Vanishing Betti numbers}\label{ss:comput}
As direct application of Theorem \ref{t:local_fibres}, we provide explicit computations of the ranks of the vanishing homology of some projective hypersurfaces.

\begin{example} \label{ex:cubic2} [some cubic hypersurfaces]

If $V :=\{  x^2z  + y^2w = 0 \} \subset \bP^3$ then  $\Sing V$ is a projective line and its generic transversal type is $A_1$.
There are three axis points and two special points $q$ with local singularity type $D_{\infty}$. The hypersurface singularity germ $D_{\infty}$ is an \emph{isolated line singularity} in the terminology of \cite{Si1}. Its Milnor fibre $F$ is homotopy equivalent to the sphere $S^2$,  the transversal monodromy is $- \id$. From Corollary \ref{c:annul0} it follows that  $H^{\curlyvee}_{4}(V) \simeq H_1(F) = 0$ and applying Corollary \ref{c:annul}
we get that $\rank H^{\curlyvee}_{3}(V)=5$.

For $V :=\{  x^2z  + y^2w+ t^3= 0 \} \subset \bP^4$,  $\Sing V$ is again a projective line but its generic transversal type is $A_2$, with three axis points and two special points for both of which the local Milnor fibre $F$ is homotopy equivalent to $S^3 \vee S^3$.  Then Corollary \ref{c:annul} yields $H^{\curlyvee}_{5}(V) \simeq H_2(F) = 0$ and  $\rank H^{\curlyvee}_{4}(V)=10$.
This construction can be iterated, for instance
 $V :=\{  x^2z  + y^2w+ t_1^3 + t_2^3= 0 \} \subset \bP^5$ has $H^{\curlyvee}_{6}(V) = 0$ and $\rank H^{\curlyvee}_{5}(V)=20$.
\end{example}

\begin{example} \label{ex:Rnonvoid} [including an isolated singular point]

Let $ V= \{y^2(x+y-1)(x-y+1) + z^4 = 0 \} \subset 
\bP^3$. We have $\Sing V$ is the disjoint union of  $\Sigma$, a projective line $\{ y=z=0 \}$ with transversal type $A_3$ and a point 
$R = \{ (0:1:0:0) \}$ of type $A_3$. There are two special points: 
$Q = \{ (1:0:0:0), (-1:0:0:0) \}$, each of them with Milnor fibre $S^2 \vee S^2 \vee S^2$. It follows that  $H^{\curlyvee}_{4}(V) = 0$ and $\rank H^{\curlyvee}_{3}(V) = 21$.
\end{example}

\begin{example}\label{ex:red} [singular locus with two disjoint curve components]

 Let $V := \{ f= x^2z^2 + x^2w^2 + y^2z^2 + 2y^2w^2 = 0\} \subset \bP^3$, which is defined by an element $f$ of the ideal  $(x,y)^2 \cap (z,w)^2$. Then $\Sing V = \Sigma = \Sigma_1\cup \Sigma_2$, where  $\Sigma_1= \{x=y=0\}$ and $\Sigma_2 = \{z=w=0\}$.
 It turns out that the generic transversal type at both of the line components of the singular locus  is $A_1$ and that there are exactly four $D_{\infty}$-points  on each of these two components. We are in the situation of Corollary  \ref{c:annul}, hence $H^{\curlyvee}_4(V) = 0$ and $\rank H^{\curlyvee}_3(V)=20$.
\end{example}

\subsection{Computation of vanishing homology groups}\label{ss:compvanishgroups}

Using the full details of the proof of Theorem \ref{t:local_fibres},  we may compute not only  the rank of the vanishing homology groups, but in several examples even the vanishing homology group $H^{\curlyvee}_{n+1}(V)$ itself, as follows.

The main ingredient is the map $j^{[k]} = j^{[k]}_1 \oplus  j^{[k]}_2 : H_k(\cZ,\cC) \rightarrow H_k(\cX,\cA) \oplus H_k(\cY,\cB)$, which was denoted  by $j$ in \eqref{eq:j}. Like in \eqref{eq:directsum}, we use the direct sum splitting into axis, special and auxiliary contributions.
%%%%%%%%%%%%%%%%%%%End 28-10-14_%%%%%%%%%%%%%%%%%%%%%%%%%%%%%%%%%%%%%%%%%From the Mayer-Vietoris 6-term sequence \eqref{eq:MV6} we derive the short exact sequence:
$$ 0 \rightarrow \coker j^{[n+1]} \rightarrow H_{n+1}(\bV_\Delta, V_\e) \rightarrow \ker j^{[n]} \rightarrow 0$$ and the strategy will be to work with $j^{[n+1]}$ and $j^{[n]}$ at the level of generators.

\begin{example}\label{ex:rathompp} 
Let $V :=\{  x^2z  + y^3 + xyw = 0 \} \subset \bP^3$. Then  $\Sing V$ is a projective line with generic transversal type $A_1$,  3 axis points, and a single special point $q$ of local singularity type  $J_{2,\infty}$, see \cite{dJ}.  The latter is  an isolated line singularity germ, cf \cite{Si1}, with Milnor fibre $F$ a bouquet of 4 spheres $S^2$ and the transversal monodromy is the identity. By Theorem   \ref{t:local_fibres} and Corollary \ref{c:annul} we get  $H^{\curlyvee}_{4}(V) \simeq H_1(F) = 0$
 and $\rank H^{\curlyvee}_{3}(V) = 6$. We next can show (but skip the details) that there is an isomorphism  $H^{\curlyvee}_{3}(V) \simeq \bZ^6$.
Note that  Dimca \cite{Di-wh} observed that $V$ has the rational homology of $\bP^2$.
\end{example}

%%%%%%%%%%%%%%%%%%%%%%%%%%%%%%%%%%%%%%%%%%%%%%%%%%%%%%%

\begin{example}\label{ex:xyz}
 $V :=  \{ xyz = 0\} \subset \bP^3$ of degree $d=3$. Then $V$ is reducible with 3 components, $\Sing V$ is the union of 3 projective lines intersecting at a single point $[0:0:0:1]$, and  the transversal type along each of them is $A_1$.  Following the proof of Theorem \ref{t:local_fibres}, we get $\ker j_2^{[3]} = \oplus_{i=1}^3 H_1(\tF_i) \simeq \bZ^3$
and $\ker j_1^{[3]} \simeq H_1(F)$ where $F$ denotes the Milnor fibre of the non-isolated singularity of $V$ at the single special point $[0:0:0:1]$, which is homotopically equivalent to $S^1 \times S^1$. We thus get  $H^{\curlyvee}_{4}(V) \simeq H_1(F) \simeq  \bZ^2$. 

The axis $A$ of our pencil has degree 9 and intersects each of the components of $V$ at 3 general points. Hence $\nu_i = 3$,  $\gamma_i= 1$ for any $i=1,2,3$.

Applying formula \eqref{eq:euler} 
we get that the vanishing Euler characteristic is $-5$, and that $\rank H^{\curlyvee}_{3}(V) =7$. We can moreover show the freeness of this group; we skip the details.
\end{example}

%%%%%%%%%%%%%%%%%%%%%%%%%%%%%%%%%%%%%%%%%%%%%%%%%%%%%%%%%%%%%

\subsection{The surfaces case}\label{ss:surf}
 
Several examples in the previous subsections are surfaces and the computation of $H_4$ and the rank of $H_3$ could be simplified by counting the number of irreducible components of $\Sigma^*$. Indeed in case of surfaces $V \subset \bP^3$ we have:
\begin{equation}\label{e:components}
H^{\curlyvee}_{4}(V) \simeq \bZ^{r-1},
\end{equation}
where $r$ is the number of irreducible components of $V$.  

 We have included these examples anyhow as applications of our method.

Combining \eqref{e:components} with Theorem \ref{t:local_fibres} yields several consequences on the  singular set and its generic transversal types. We mention here only one:

\begin{corollary}
 $r -1 \le  \sum_{i=1}^\rho \mu_i^{\pitchfork}$.
\fin
\end{corollary}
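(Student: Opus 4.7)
The plan is essentially a one-line combination of the two ingredients already in place. Since $V\subset\bP^3$, we are in the situation $n=2$, so the relevant vanishing homology group is $H^{\curlyvee}_{n+2}(V)=H^{\curlyvee}_{4}(V)$. By the identification \eqref{e:components}, this group is free abelian of rank exactly $r-1$, where $r$ is the number of irreducible components of $V$.

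On the other hand, Theorem \ref{t:local_fibres} provides the upper bound
\[
\rank H^{\curlyvee}_{n+2}(V)\ \le\ \sum_{i=1}^{\rho}\mu_i^{\pitchfork},
\]
valid in particular for $n=2$. Substituting $\rank H^{\curlyvee}_{4}(V)=r-1$ from \eqref{e:components} into this inequality yields the claimed bound
\[
r-1\ \le\ \sum_{i=1}^{\rho}\mu_i^{\pitchfork}.
\]

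There is no real obstacle: the only thing worth double-checking is the compatibility of the indexing conventions, namely that \eqref{e:components} is stated for surfaces $V\subset\bP^3$ (so $n=2$ and $n+2=4$), and that the Theorem \ref{t:local_fibres} bound is a genuine inequality on the rank, so that comparing it with the exact value $r-1$ from \eqref{e:components} is legitimate. Both are already established in the paper, so the corollary follows immediately.
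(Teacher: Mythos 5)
Your argument is exactly the one the paper intends: combine the identification $H^{\curlyvee}_{4}(V)\simeq\bZ^{r-1}$ from \eqref{e:components} with the rank bound $\rank H^{\curlyvee}_{n+2}(V)\le\sum_{i=1}^{\rho}\mu_i^{\pitchfork}$ from Theorem \ref{t:local_fibres}, with $n=2$. Correct, and essentially identical to the paper's (one-line) derivation.
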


%%%%%%%%%%%%%%%%%%%%%%%%%%%%%%%%%%%%%%%%%%%%%%%%%%%%%%%%%%%%%%%%%%%%%%%%%%%

\subsection{Absolute homology of projective hypersurfaces.} \label{ss:abshom}
If $\dim \Sing V = 1$ then from Theorem \ref{t:main} and the long exact sequence of the pair $(\bV_\Delta, V_\e)$ one gets the isomorphisms:   
$$H_k(V) \simeq H_{k}( V_\e) = H_k(\bP^n) \ \mbox{for} \ k \not= n, n+1,n+2.$$ 
This corresponds to  Kato's result \cite{Ka} in cohomology.\footnote{Dimca states such a result \cite[Theorem 4.1]{Di-how} referring to \cite[p. 144]{Di} for Kato's proof in cohomology \cite{Ka}.}

In the remaining dimensions we have
an 8-term exact sequence:
\begin{equation} \label{eq:8-term}
\begin{array}{l}
0\to H_{n+2}(V_\e) \to H_{n+2}(\bV_\Delta) \to H_{n+2}(\bV_\Delta, V_\e) \xrightarrow{\Phi_{n+1}}
H_{n+1}(V_\e)  \ \ \ \ \ \ \ \ \ \ \ \ \ \ \  \\ \ \ \ \ \ \ \ \ \ \ \ \ \ \ \   \to H_{n+1}(\bV_\Delta) \to H_{n+1}(\bV_\Delta, V_\e) \xrightarrow{\Phi_{n}} H_{n}(V_\e) \to H_{n}(\bV_\Delta) \to 0
\end{array}
\end{equation}
 from which we obtain, with help of Theorem \ref{t:local_fibres}:
\begin{proposition}\label{p:dimca1}
Let $\dim \Sing V = 1$.  If $n$ is even, then:
\begin{enumerate}
\item[(c)] $H_{n+2}(V) \simeq \bZ \oplus  H^{\curlyvee}_{n+2}(V)$,
\item[(d)] $H_{n+1}(V)  \simeq \ker \Phi_{n}$, 
\item[(e)] $H_{n}(V)  \simeq   \coker \Phi_{n}$,
\end{enumerate}
whereas for any $n$ one has the following inequalities:
\begin{enumerate}
\item $ b_{n+2}(V) \le 1+ \sum_{i=1}^\rho \mu_i^{\pitchfork}$
\item $b_n(V) \le \dim H_n(V_\e)$.
\end{enumerate}
\end{proposition}

This can be regarded as a natural extension of Proposition \ref{p:dimca} to 1-dimensional singularities, thus extending also Dimca's corresponding result for isolated singularities that was discussed in \S \ref{s:vh-is}.
Like  in the isolated singularities setting, one has to deal with the difficulty of identifying $\Phi_n$ from the equation of $f$.

%%%%%%%%%%%%%%%%%%%%%Homology manifold, included oct 24, 2014%%%%%%%%%%%%%
\begin{example}

Let $V := \{f(x,y) + f(z,w) = 0 \} \subset \bP^3$,  where $f(x,y)= y^2 \prod_{i=1}^3 (x - \alpha_i y)$, with $\alpha_i\not= 0$ pairwise different.
Its singular set is the smooth line given by $y=w = 0$, with generic transversal type $y^2 + w^3$.
There are two special points $[0:0:1:0]$ and $[1:0:0:0]$, each  with Milnor fibre a bouquet of spheres $S^2$.
 By Corollary \ref{c:annul} we get $H^{\curlyvee}_{4}(V) = 0$ and from the Euler characteristic formula \eqref{eq:euler} (by computing its ingredients) we get $\rank H^{\curlyvee}_{3}(V) = 38$.
One can compute the eigenvalues of the monodromies for all types of singular points; they are all different from 1. By using Randell's criterion \cite[Proposition 3.6]{Ra} one can show that $V$ is a $\bQ$-homology manifold. Since a homology manifold satisfies Poincar\'e duality, it follows e.g. that $H_{3}(V;\bQ) \cong H_{1}(V;\bQ) \cong H_{1}(\bP^n;\bQ) =0$ and  $H_{4}(V;\bQ)\cong H_{0}(V;\bQ) \cong H_{0}(\bP^n;\bQ) \cong \bQ$.  By computations and the exact sequence \eqref{eq:8-term}  we get also $H_{2}(V;\bQ)$ since $\rank H_{2}(V) = \rank H_{2}(V_\e) - \rank  H_{3}(\bV_\Delta, V_\e) = 53-38= 15$.

\end{example}

%%%%%%%%%%%%%%%%%%%%%%%%%%%%%%%%%%%%%%%%%

%

\end{document}